\theoremstyle{plain}
\newtheorem{theorem}{Theorem}[section]
\newtheorem{corollary}{Corollary}[theorem]
\theoremstyle{definition}
\theoremstyle{remark}
\def\customauthor{\empty}
\def\customdate{\empty}
\let\oldauthor\author
\renewcommand{\author}[1]{\def\customauthor{#1}}
\renewcommand{\date}[1]{\def\customdate{#1}}
\begin{document}

\overfullrule=0pt

\author{ Dirk Veestraeten\\\\\\\\}
\date{\today\\\\}

\address{Amsterdam School of Economics \\
University of Amsterdam\\
Roetersstraat 11\\
1018WB Amsterdam\\
 the Netherlands} 
 
\email{d.j.m.veestraeten@uva.nl}

\title[Integral transforms and integral representations \dots ] 
      {Some Laplace transforms and integral representations for parabolic cylinder functions and error functions}

\begin{abstract}
This paper uses the convolution theorem of the Laplace transform to derive new inverse Laplace transforms for the product of two parabolic cylinder functions in which the arguments may have opposite sign. These transforms are subsequently specialized for products of the error function and its complement thereby yielding new integral representations for products of the latter two functions. The transforms that are derived in this paper also allow to correct two inverse Laplace transforms that are widely reported in the literature and subsequently uses one of the corrected expressions to obtain two new definite integrals for the generalized hypergeometric function.
\end{abstract}

\maketitle

\noindent \keywords{\textbf{Keywords:} confluent hypergeometric function, convolution theorem, error function,
Gaussian hypergeometric function, generalized hypergeometric function, Laplace transform,
parabolic cylinder function} \\

\noindent \subjclass{\textbf{MSC2010:} 33B20, 33C05, 33C15, 33C20, 44A10, 44A35} \\

\section{Introduction}

\noindent The parabolic cylinder function is intensively used in various domains such as chemical physics \cite{kcw96}, lattice field theory \cite{dfg92}, astrophysics \cite{zm07}, finance \cite{nl19}, neurophysiology \cite{cr71} and estimation theory \cite{bv09}. Products of parabolic cylinder functions involving both positive and negative arguments arise in, for instance, problems of condensed matter physics \cite{cd83,m99} and the study of real zeros of parabolic cylinder functions \cite{d75,em99,em08}.

However, the extensive tables of inverse Laplace transforms \cite{emoti154,ob73,pbm592} present relatively few expressions for products of parabolic cylinder functions especially when signs of the arguments differ. For example, \cite{pbm592} only specifies the following inverse Laplace transforms for such set--up
\begin{align*}
&  D_{\nu}\left(  a\sqrt{p+\sqrt{p^{2}+b^{2}}}\right)  \left\{  D_{\nu}\left(
-a\sqrt{\sqrt{p^{2}+b^{2}}-p}\right)  \pm D_{\nu}\left(  a\sqrt{\sqrt
{p^{2}+b^{2}}-p}\right)  \right\}  \\
&  D_{\nu}\left(  a\sqrt{p+\sqrt{p^{2}-b^{2}}}\right)  \left\{  D_{\nu}\left(
-a\sqrt{p-\sqrt{p^{2}-b^{2}}}\right)  \pm D_{\nu}\left(  a\sqrt{p-\sqrt
{p^{2}-b^{2}}}\right)  \right\}
\end{align*}
see Equations (3.11.4.9) and (3.11.4.10).

This paper uses the convolution theorem of the Laplace transform to derive inverse Laplace transforms for
\begin{align*}
& p^{i}\exp\left(  \tfrac{1}{2}p\left( y-x\right)  \right)  D_{\mu}\left(
2^{1/2}y^{1/2}p^{1/2}\right)  \left\{  D_{\nu}\left(  -2^{1/2}x^{1/2}%
p^{1/2}\right)  \pm D_{\nu}\left(  2^{1/2}x^{1/2}p^{1/2}\right)  \right\}
\end{align*}
with $i=0$ or $-\tfrac{1}{2}$, i.e. for expressions in which the arguments have opposite sign and differ, and where also the orders take on different values.

These results also offer inverse Laplace transforms for the product of (complementary) error functions as the parabolic cylinder function for order $-1$ specializes into the complementary error function. As a result, novel integral representations are obtained for products of the (complementary) error functions and, for instance, the integral representation for 1 -- erf$(a)^{2}$ in \cite{ng69} can be generalized into 1 -- erf$(a)$erf$(b)$.

The paper also corrects two inverse Laplace transforms that are reported in \cite{emoti154,ob73,pbm592}. Combinations of one of the corrected results with the results derived in this paper are particularly interesting as they yield two definite integrals for the generalized hypergeometric function that are not reported in, for instance, the comprehensive overview in \cite{gr14}.

The remainder of this paper is organized as follows. Section $2$ presents the relation between the parabolic cylinder function and the Kummer confluent hypergeometric function that is central to the subsequent derivations. Also, more detail is presented on the formulation of the convolution theorem for the Laplace transform given that the limits of integration in the integrals in the product differ. Section $3$ presents the inverse Laplace transforms for products of the parabolic cylinder function and uses these results to obtain novel integral representations for products of (complementary) error functions. Section $4$ corrects two widely-reported inverse Laplace transforms. Section $5$ uses one of these corrected expressions together with the results of Section $3$ to derive two novel definite integrals for the generalized hypergeometric function.
\medskip
\section{Notation and background} 
\noindent The parabolic cylinder function in the definition of Whittaker \cite{w02} is denoted 
by $D_{\nu }\left( z\right) $, where $\nu $ and $z$ represent the order and the argument, respectively. Equation (4) on p. 117 in \cite{emoth253}
defines the parabolic cylinder function in terms of Kummer's confluent
hypergeometric function $\Phi \left( a;b;z\right) $ as follows%
\begin{align}
& D_{\nu }\left( z\right) = 2^{\nu /2}\exp \left( -\tfrac{1}{4}z^{2}\right) %
\left\{ \dfrac{\Gamma \left[ 1/2\right] }{\Gamma \left[ \left( 1-\nu \right)
/2\right] }\Phi \left( -\dfrac{\nu }{2};\dfrac{1}{2};\dfrac{1}{2}%
z^{2}\right) \right.  \nonumber \\
& \hspace{2cm} \left. +\dfrac{z}{2^{^{1/2}}}\dfrac{\Gamma \left[ -1/2\right] }{\Gamma
\left[ -\nu /2\right] }\Phi \left( \dfrac{1-\nu }{2};\dfrac{3}{2};\dfrac{1}{2%
}z^{2}\right) \right\}  \label{dkummer}
\end{align}
\noindent where $\Gamma \left[ \nu \right] $ denotes the gamma function. Note that the definition (\ref{dkummer}) holds for $z$ as well as $-z$ and adding the corresponding relation for $D_{\nu }\left( -z\right) $ to (\ref
{dkummer}) then gives
\begin{align}
& D_{\nu }\left( -z\right) -D_{\nu }\left( z\right) = \dfrac{z2^{\left( \nu
+3\right) /2}\sqrt{\pi }}{\Gamma \left[ -\nu /2\right] }\exp \left( -\tfrac{1%
}{4}z^{2}\right) \Phi \left( \dfrac{1-\nu }{2};\dfrac{3}{2};\dfrac{1}{2}%
z^{2}\right)  \label{sum1} \\
& D_{\nu }\left( -z\right) +D_{\nu }\left( z\right) = \dfrac{2^{\left( \nu
+2\right) /2}\sqrt{\pi }}{\Gamma \left[ \left( 1-\nu \right) /2\right] }\exp
\left( -\tfrac{1}{4}z^{2}\right) \Phi \left( -\dfrac{\nu }{2};\dfrac{1}{2};%
\dfrac{1}{2}z^{2}\right) \label{sum2}
\end{align}%
\noindent see Equations (46:5:4) and (46:5:3) in \cite{oms09}.

The convolution theorem of the Laplace transform will be used to derive inverse Laplace transforms
for products of two parabolic cylinder functions. The functions in the products are taken from inverse Laplace
transforms for the parabolic cylinder function and the Kummer confluent hypergeometric function, respectively. The inverse Laplace transforms that will be used for $\Phi \left( a;b;z\right) $ and $D_{\nu }\left( z\right) $ are not both defined over the half--line $\left( 0,\infty \right)$. As a result, the convolution theorem becomes somewhat more involved. The Laplace transforms of the original functions $f_{1}\left( t\right) $ and $f_{2}\left( t\right) $ are defined as
\begin{align}
& \overline{f}_{1}\left( p\right)  = \int_{\alpha _{1}}^{\beta _{1}}\exp
\left( -pt\right) f_{1}\left( t\right) dt\hspace{1.5cm}\beta _{1}>\alpha
_{1} \nonumber \\
& \overline{f}_{2}\left( p\right)  = \int_{\alpha _{2}}^{\beta _{2}}\exp
\left( -pt\right) f_{2}\left( t\right) dt\hspace{1.5cm}\beta _{2}>\alpha
_{2}\nonumber
\end{align}%
\noindent where $\operatorname{Re}p>0$. The convolution theorem then can be specified, see 
\cite{pbm492}, as%
\begin{align}
\overline{f}_{1}\left( p\right) \overline{f}_{2}\left( p\right)
=\int_{\alpha _{1}+\alpha _{2}}^{\beta _{1}+\beta _{2}}\exp \left(
-pt\right) f_{1}\left( t\right) \ast f_{2}\left( t\right) dt  \label{ltconvol}
\end{align}%
\noindent where $f_{1}\left( t\right) \ast f_{2}\left( t\right) $ is the
convolution of $f_{1}\left( t\right) $ and $f_{2}\left( t\right) $ that is
to be obtained from%
\begin{align}
f_{1}\left( t\right) \ast f_{2}\left( t\right) =\int_{\max \left( \alpha
_{1};\text{ }t-\beta _{2}\right) }^{\min \left( \beta _{1};\text{ }t-\alpha
_{2}\right) }f_{1}\left( \tau \right) f_{2}\left( t-\tau \right) d\tau 
\label{iltconvol}
\end{align}%
\medskip
\section{Inverse Laplace transforms for products of parabolic cylinder functions}
\noindent This Section derives several inverse Laplace transforms for products of parabolic cylinder functions in which the sign of the arguments may differ and utilizes these results to obtain new integral representations for products of (complementary) error functions.
\bigskip
\begin{theorem}
Let $\nu$ and $\mu$ be two complex numbers with $\operatorname{Re}\nu <1$ and $\operatorname{Re}\mu <\min \left[ 1-\operatorname{Re}\nu ,2+\operatorname{Re}
\nu \right] $. Then, the following inverse Laplace transform holds for $\operatorname{Re}p>0$, $x>0$, $\left\vert \arg y\right\vert <\pi$, $y>0$ 
\begin{align}
&p^{-1/2}\exp \left( \tfrac{1}{2}p\left( y-x\right) \right) D_{\mu }\left(
2^{1/2}y^{1/2}p^{1/2}\right) \{D_{\nu }\left( -2^{1/2}x^{1/2}p^{1/2}\right) 
\nonumber \\
&\hspace{2cm}-D_{\nu }\left( 2^{1/2}x^{1/2}p^{1/2}\right) \}= \label{ilt1}\\
&\dfrac{2^{\left( \mu -\nu \right) /2}\sqrt{\pi }}{\Gamma \left[ 1+\left(
\nu -\mu \right) /2\right] \Gamma \left[ -\nu \right] }\int_{0}^{x}\exp
\left( -pt\right) t^{\left( \nu -\mu \right) /2}\left( x-t\right) ^{-\left(1+\nu\right) /2} \nonumber \\
&\hspace{1cm}\times \left( y+t\right) ^{\mu /2} \mathstrut_2 F_1\left( -\dfrac{\mu }{2},%
\dfrac{1+\nu }{2};1+\dfrac{\nu -\mu }{2};\dfrac{t\left( x-y-t\right) }{%
\left( x-t\right) \left( y+t\right) }\right) dt \nonumber \\
&+\dfrac{2^{2+\left( \mu +\nu \right) /2}\sqrt{\pi }y^{1/2}x^{1/2}}{\Gamma %
\left[ -\mu /2\right] \Gamma \left[ -\nu /2\right] }\int_{x}^{\infty }\exp
\left( -pt\right) t^{\left( \nu -1\right) /2}\left( t-x\right) ^{-\left(
1+\mu +\nu \right) /2} \nonumber \\
&\hspace{1cm}\times \left( y-x+t\right) ^{\left( \mu -1\right) /2} \mathstrut_2 F_1\left( 
\dfrac{1-\mu }{2},\dfrac{1-\nu }{2};\dfrac{3}{2};\dfrac{xy}{t\left(
y-x+t\right) }\right) dt \nonumber
\end{align}
\end{theorem}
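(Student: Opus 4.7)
The plan is to use the convolution theorem (\ref{ltconvol})--(\ref{iltconvol}) after rewriting the left-hand side of (\ref{ilt1}) as a product of two Laplace transforms. First I apply identity (\ref{sum1}) with $z=2^{1/2}x^{1/2}p^{1/2}$ to replace the bracketed difference by
$$ D_{\nu}(-\sqrt{2xp})-D_{\nu}(\sqrt{2xp})=\frac{\sqrt{2xp}\,2^{(\nu+3)/2}\sqrt{\pi}}{\Gamma[-\nu/2]}\,e^{-xp/2}\,\Phi\!\left(\tfrac{1-\nu}{2};\tfrac{3}{2};xp\right). $$
The resulting $p^{1/2}$ cancels the $p^{-1/2}$ already present, so the left-hand side becomes a constant times $\bar f_{1}(p)\,\bar f_{2}(p)$ with $\bar f_{1}(p)=e^{py/2}D_{\mu}(\sqrt{2yp})$ and $\bar f_{2}(p)=e^{-xp}\Phi((1-\nu)/2;3/2;xp)$.

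Next I would identify the originals $f_{1}$ and $f_{2}$. Using the Tricomi representation $D_{\mu}(\sqrt{2yp})=2^{\mu/2}e^{-yp/2}\Psi(-\mu/2,1/2;yp)$ together with the standard integral representation of $\Psi$, one obtains
$$ \bar f_{1}(p)=\frac{2^{\mu/2}y^{1/2}}{\Gamma[-\mu/2]}\int_{0}^{\infty}e^{-ps}s^{-\mu/2-1}(y+s)^{(\mu-1)/2}\,ds, $$
so that $f_{1}$ is supported on $(0,\infty)$; this requires $\Re\mu<0$, a restriction to be lifted at the end. For $\bar f_{2}$ I would apply Kummer's transformation to get $\bar f_{2}(p)=\Phi(1+\nu/2;3/2;-xp)$ and then Euler's integral for $\Phi$, yielding
$$ \bar f_{2}(p)=\frac{\Gamma[3/2]\,x^{-1/2}}{\Gamma[1+\nu/2]\,\Gamma[(1-\nu)/2]}\int_{0}^{x}e^{-ps}s^{\nu/2}(x-s)^{-(1+\nu)/2}\,ds, $$
valid for $-2<\Re\nu<1$, so that $f_{2}$ is supported on $(0,x)$. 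Because $f_{2}$ has finite support, the convolution (\ref{iltconvol}) reads $\int_{\max(0,t-x)}^{t}f_{1}(\tau)f_{2}(t-\tau)\,d\tau$, which naturally splits as $\int_{0}^{t}$ for $0<t<x$ and as $\int_{t-x}^{t}$ for $t>x$; these two cases are exactly what produces the two integrals in (\ref{ilt1}).

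The main step is reducing each inner convolution integral to a $\mathstrut_2F_{1}$. For the first piece I would substitute $\tau=tu$ and then apply the M\"obius change of variables $u=yv/(y+t-tv)$, which fixes $0$ and $1$ and sends the zero $u=-y/t$ of $(y+tu)$ to $v=\infty$. A direct calculation shows $(x-t+tu)=(x-t)(y+t)(1-Zv)/(y+t-tv)$ with $Z=t(x-y-t)/((x-t)(y+t))$, so that the zero of $(x-t+tu)$ lands precisely at $v=1/Z$. Crucially, the powers of $(y+t-tv)$ contributed by $u^{-\mu/2-1}$, $(1-u)^{\nu/2}$, $(y+tu)^{(\mu-1)/2}$, $(x-t+tu)^{-(1+\nu)/2}$ and the Jacobian sum to zero, and the new integrand reduces to $v^{-\mu/2-1}(1-v)^{\nu/2}(1-Zv)^{-(1+\nu)/2}$, which is Euler's representation of $\mathstrut_2F_{1}(-\mu/2,(1+\nu)/2;1+(\nu-\mu)/2;Z)$ up to a Beta constant. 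For the second piece ($t>x$) the analogous substitution $\tau=(t-x)+xu$ followed by $u=(t-x)w/(t-xw)$ sends the zero of $(1+xu/(t-x))$ to $w=\infty$ and makes $(1+xu/(y-x+t))=t(1-Ww)/(t-xw)$ with $W=xy/(t(y-x+t))$; the same cancellation of $(t-xw)$-powers occurs and the integrand collapses to $w^{-(1+\nu)/2}(1-w)^{\nu/2}(1-Ww)^{(\mu-1)/2}$, Euler's representation of $\mathstrut_2F_{1}((1-\mu)/2,(1-\nu)/2;3/2;W)$.

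To conclude, I would match the overall constants using Gauss's duplication formula $\Gamma[-\nu]=2^{-\nu-1}\pi^{-1/2}\Gamma[-\nu/2]\Gamma[(1-\nu)/2]$ to convert the product of Gamma factors produced by the two Euler integrals into the coefficients shown in (\ref{ilt1}). The restriction $\Re\mu<0$ used to invert $\bar f_{1}$ is then removed by analytic continuation, since both sides are analytic in $\mu$ throughout the strip $\Re\mu<\min[1-\Re\nu,2+\Re\nu]$, which is exactly where the right-hand side integrals remain absolutely convergent at the endpoints $t=0$, $t=x$ and $t=\infty$. The principal difficulty lies in the bookkeeping of the M\"obius reductions: one must check that the chosen transformation simultaneously preserves the integration interval $[0,1]$, collapses the four-factor integrand to a three-factor integrand (so that what is \emph{a priori} an Appell $F_{1}$ truly degenerates to a $\mathstrut_2F_{1}$), and produces the precise $\mathstrut_2F_{1}$ argument claimed in (\ref{ilt1}); without the cancellation of $(y+t-tv)$-powers in the first piece and of $(t-xw)$-powers in the second, one would be stuck with a residual factor incompatible with a Gauss hypergeometric.
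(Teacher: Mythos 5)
Your proposal is correct and follows essentially the same route as the paper: the same two tabulated originals (the Erd\'elyi-type integral for $e^{yp/2}D_{\mu}(\sqrt{2yp})$ on $(0,\infty)$ and the Euler integral for $e^{-xp}\Phi(\tfrac{1-\nu}{2};\tfrac32;xp)$ on $(0,x)$), the same convolution split at $t=x$, and identity (\ref{sum1}) to pass between the $\Phi$ form and the difference of parabolic cylinder functions. The only cosmetic difference is that you carry out the M\"obius reduction of the four-factor Euler integrals by hand (your substitutions and the vanishing of the residual powers check out, and for the $t>x$ piece your choice of which linear factor to send to infinity lands directly on the argument $xy/(t(y-x+t))$), whereas the paper packages the same computation as the Appell $F_{1}$ reduction formula followed by the linear transformation (15.3.4); you are also more explicit than the paper about removing the auxiliary restriction $\operatorname{Re}\mu<0$ by analytic continuation.
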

\noindent where $\mathstrut_2 F_1(a,b;c;z)$ denotes the Gaussian hypergeometric function, see \cite{as72}. \\
\begin{proof}
The inverse Laplace transform in Equation (5) on p. 290 in \cite{emoti154} is
\begin{align}
& \Gamma\left[  \nu\right]  \exp\left(  \tfrac{1}{2}ap\right)  D_{-2\nu
}\left(  2^{1/2}a^{1/2}p^{1/2}\right)  = \label{iltpcf1} \\
&\hspace{1cm} \int_{0}^{\infty}\exp\left(
-pt\right)  2^{-\nu}a^{1/2}t^{\nu-1}\left(  t+a\right)  ^{-\nu-1/2}dt \nonumber \\
& \hspace{2cm}\left[  \operatorname{Re}p>0,\operatorname{Re}\nu>0,\left\vert
\arg a\right\vert <\pi\right]\nonumber 
\end{align}
\noindent and the inverse Laplace transform in Equation (3.33.2.2) in \cite{pbm592} is
\begin{align}
& \exp \left( -xp\right) \Phi \left( a;b;xp\right) = \label{iltkum1} \\
& \hspace{1cm}\dfrac{x^{1-b}\Gamma
\left[ b\right] }{\Gamma \left[ b-a\right] \Gamma \left[ a\right] }
\int_{0}^{x}\exp \left( -pt\right) t^{b-a-1}\left( x-t\right) ^{a-1}dt \nonumber \\
& \hspace{2cm}\left[ \operatorname{Re}p>0,\operatorname{Re}b>\operatorname{Re}a>0,x>0\right] \nonumber   
\end{align}
\noindent These two inverse Laplace transforms, in the notation of Theorem 3.1, are rewritten as
\begin{align}
& \Gamma\left[  -\mu/2\right]  \exp\left(  \tfrac{1}{2}yp\right)  D_{\mu
}\left(  2^{1/2}y^{1/2}p^{1/2}\right)  =\nonumber \\
& \hspace{1cm}\int_{0}^{\infty}\exp\left(
-pt\right)  2^{\mu/2}y^{1/2}t^{-\mu/2-1}\left(  t+y\right)  ^{\left(
\mu-1\right)  /2}dt\nonumber\\
& \hspace{2cm}\left[  \operatorname{Re}p>0,\operatorname{Re}\mu<0,\left\vert
\arg y\right\vert <\pi\right] \label{iltpcf2}%
\end{align}
\noindent and
\begin{align}
& x^{1/2} \frac{2}{\sqrt{\pi}} \Gamma \left[ 1+\nu /2\right] \Gamma \left[ \left( 1-\nu
\right) /2\right] \exp \left( -xp\right) \Phi \left( \dfrac{%
1-\nu }{2};\dfrac{3}{2};px\right) =  \nonumber \\
& \hspace{1cm}\int_{0}^{x}\exp \left( -pt\right) t^{\nu
/2}\left(x-t\right) ^{-\left( 1+\nu \right) /2}dt\nonumber \\
& \hspace{2cm}\left[ \operatorname{Re}p>0,-2<\operatorname{Re}\nu <1,x>0\right]
 \label{iltkum2}
\end{align}
\noindent The original functions $f_{1}\left(  t\right)  $ and $f_{2}\left(
t\right) $ are taken from the inverse Laplace transforms (\ref{iltpcf2}) and (\ref{iltkum2}), respectively, with%
\[
f_{1}\left(  t\right)  =2^{\mu/2}y^{1/2}t^{-\mu/2-1}\left(  t+y\right)
^{\left(  \mu-1\right)  /2}\text{ and }f_{2}\left(  t\right)  =t^{\nu
/2}\left( x-t\right)  ^{-\left(  1+\nu\right)  /2}
\]
\noindent The integration limits in (\ref{ltconvol}) and (\ref{iltconvol}) are $\beta_{1}=\infty,\beta_{2}=x$ and $\alpha_{1}=\alpha_{2}=0$ such that the convolution integral is given by%
\begin{equation}%
\begin{tabular}
[c]{lll}%
$f_{1}\left(  t\right)  \ast f_{2}\left(  t\right)  $ & $=%
\displaystyle\int\nolimits_{0}^{t}
f_{1}\left(  \tau\right)  f_{2}\left(  t-\tau\right)  d\tau$ & $\hspace
{1cm}t<x$\\
&  & \\
& $=%
\displaystyle\int\nolimits_{t-x}^{t}
f_{1}\left(  \tau\right)  f_{2}\left(  t-\tau\right)  d\tau$ & $\hspace
{1cm}t>x$%
\end{tabular}
\label{convol1}%
\end{equation}
\noindent First, the convolution integral for $t<x$ is%
\begin{align}
& f_{1}\left(  t\right)  \ast f_{2}\left(  t\right)  =\int_{0}^{t}2^{\mu
/2}y^{1/2}\tau^{-\mu/2-1}\left(  \tau+y\right)  ^{\left(  \mu-1\right)
/2}\left(  t-\tau\right)  ^{\nu/2}\nonumber  \\
& \hspace{1cm}\times \left( x-\left(
t-\tau\right)  \right)  ^{-\left(  1+\nu\right)  /2}d\tau\nonumber 
\end{align}
\noindent The substitution $\tau =tu$ allows to rewrite the integral as%
\begin{align}
& f_{1}\left( t\right) \ast f_{2}\left( t\right) =2^{\mu /2}t^{\left( \nu
-\mu \right) /2}y^{\mu /2}\left( x-t\right) ^{-\left( 1+\nu \right) /2} \nonumber \\
& \hspace{1cm}\times \int_{0}^{1}u^{-\mu /2-1}\left( 1+\dfrac{t}{y}u\right)
^{\left( \mu -1\right) /2}\left( 1-u\right) ^{\nu /2}\left( 1-\dfrac{t}{t-x}%
u\right) ^{-\left( 1+\nu \right) /2}du\nonumber 
\end{align}%
\noindent The integral in the latter equation can be expressed in terms of the Appell
function $F_{1}\left( a,b_{1},b_{2};c;z_{1},z_{2}\right) $ given that%
\begin{align}
& \dfrac{\Gamma \left[ a\right] \Gamma \left[ c-a\right] }{\Gamma \left[ c%
\right] }F_{1}\left( a,b_{1},b_{2};c;z_{1},z_{2}\right)= \nonumber \\
& \hspace{1cm} \int_{0}^{1}u^{a-1}\left( 1-u\right) ^{c-a-1}\left( 1-z_{1}u\right)
^{-b_{1}}\left( 1-z_{2}u\right) ^{-b_{2}}du\nonumber 
\end{align}%
\noindent for $\operatorname{Re}c>\operatorname{Re}a>0$, see Equation (5) on p. 231 in \cite{emoth153}. This
gives%
\begin{align}
& f_{1}\left( t\right) \ast f_{2}\left( t\right) =2^{\mu /2}t^{\left( \nu
-\mu \right) /2}y^{\mu /2}\left( x-t\right) ^{-\left( 1+\nu \right) /2}%
\dfrac{\Gamma \left[ -\mu /2\right] \Gamma \left[ 1+\left( \nu /2\right) %
\right] }{\Gamma \left[ 1+\left( \nu -\mu \right) /2\right] } \nonumber \\
& \hspace{1cm}\times F_{1}\left( -\dfrac{\mu }{2},\dfrac{1+\nu }{2},\dfrac{1-\mu }{2};1+\dfrac{\nu -\mu }{2};\dfrac{t}{t-x},-\dfrac{t}{y}%
\right)\nonumber 
\end{align}
\noindent The above Appell function can further be simplified into the Gaussian hypergeometric function given
\[
F_{1}\left( a,b_{1},b_{2};b_{1}+b_{2};z_{1},z_{2}\right) =\left(
1-z_{2}\right) ^{-a} \mathstrut_2 F_1\left( a,b_{1};b_{1}+b_{2};\dfrac{z_{1}-z_{2}}{1-z_{2}}%
\right)
\]%
\noindent see Equation (1) on p. 238 in \cite{emoth153}. The final expression for the convolution integral for $t<x$ then is%
\begin{align}
& f_{1}\left( t\right) \ast f_{2}\left( t\right) =2^{\mu /2}t^{\left( \nu
-\mu \right) /2}\left( x-t\right) ^{-\left( 1+\nu \right) /2}\left(
y+t\right) ^{\mu /2}\dfrac{\Gamma \left[ -\mu /2\right] \Gamma \left[
1+\left( \nu /2\right) \right] }{\Gamma \left[ 1+\left( \nu -\mu \right) /2%
\right] }  \nonumber \\
& \hspace{1cm}\times \mathstrut_2 F_1\left( -\dfrac{\mu }{2},\dfrac{1+\nu }{2};1+\dfrac{\nu
-\mu }{2};\dfrac{t\left( t+y-x\right) }{\left( t-x\right)\left( y+t\right)  }%
\right) \hspace{1cm}t<x  \label{convol2}
\end{align}%
\noindent Second, the convolution integral for $t>x$ is given by
\begin{align}
& f_{1}\left(  t\right)  \ast f_{2}\left(  t\right)  =\int_{t-x}^{t}2^{\mu
/2}y^{1/2}\tau^{-\mu/2-1}\left(  \tau+y\right)  ^{\left(  \mu-1\right)
/2}\left(  t-\tau\right)  ^{\nu/2} \nonumber \\
& \hspace{1cm} \times \left( x-\left(
t-\tau\right)  \right)  ^{-\left(  1+\nu\right)  /2}d\tau\nonumber 
\end{align}
\noindent The treatment of this convolution integral is similar to that of the integral for $t<x$ such that only the main steps are mentioned. The substitutions $\tau =s-x+t$ and $s=xu$ express the integral in terms of the Appell function $F_{1}$ that again can be simplified into the Gaussian hypergeometric function. The convolution integral for $t>x$ then is given by%
\begin{align}
& f_{1}\left( t\right) \ast f_{2}\left( t\right) =\dfrac{1}{\sqrt{\pi }}%
x^{1/2}y^{1/2}2^{1+\left( \mu /2\right) }\left( t-x\right) ^{-\left( 1+\mu
+\nu \right) /2}\left( y+t-x\right) ^{\left( \mu -1\right) /2}t^{\left( \nu
-1\right) /2}  \nonumber \\
& \hspace{1cm}\times \Gamma \left[ \left( 1-\nu \right) /2\right] \Gamma %
\left[ 1+\left( \nu /2\right) \right] \mathstrut_2 F_1\left( \dfrac{1-\mu }{2},\dfrac{1-\nu 
}{2};\dfrac{3}{2};\dfrac{xy}{t\left( t+y-x\right) }\right) \hspace{0.4cm}t>x 
 \label{convol3}
\end{align}
\noindent of which the derivation also used the following linear transformation formula%
\begin{align}
\mathstrut_2 F_1\left( a,b;c;z\right) =\left( 1-z\right) ^{-a}\mathstrut_2 F_1\left( a,c-b;c;\dfrac{z}{z-1}%
\right)\nonumber 
\end{align}
\noindent see Equation (15.3.4) in \cite{as72}.

Plugging (\ref{convol2}) and (\ref{convol3}) into the convolution integral (\ref{convol1}) then gives
\begin{align}
& \exp \left( \tfrac{1}{2}py-px\right) D_{\mu }\left(
2^{1/2}y^{1/2}p^{1/2}\right) \Phi \left( \dfrac{1-\nu }{2};\dfrac{3}{2};px\right)
   \label{ilt2}\\
&\dfrac{2^{\left( \mu /2\right) -1}\sqrt{\pi }x^{-1/2}}{\Gamma \left[
1+\left( \nu -\mu \right) /2\right] \Gamma \left[ \left( 1-\nu \right) /2%
\right] }\int_{0}^{x}\exp \left( -pt\right) t^{\left( \nu -\mu \right)
/2}\left( x-t\right) ^{-\left( 1+\nu \right) /2} \nonumber \\
&\hspace{1cm}\times \left( y+t\right) ^{\mu /2}\mathstrut_2 F_1\left( -\dfrac{\mu }{2},%
\dfrac{1+\nu }{2};1+\dfrac{\nu -\mu }{2};\dfrac{t\left( x-y-t\right) }{%
\left( x-t\right) \left( y+t\right) }\right) dt \nonumber \\
&+\dfrac{2^{\mu /2}y^{1/2}}{\Gamma \left[ -\mu /2\right] }\int_{x}^{\infty
}\exp \left( -pt\right) t^{\left( \nu -1\right) /2}\left( t-x\right)
^{-\left( 1+\mu +\nu \right) /2} \nonumber \\
&\hspace{1cm}\times \left( y-x+t\right) ^{\left( \mu -1\right) /2}\mathstrut_2 F_1\left( 
\dfrac{1-\mu }{2},\dfrac{1-\nu }{2};\dfrac{3}{2};\dfrac{xy}{t\left(
y-x+t\right) }\right) dt\nonumber 
\end{align}
\noindent in which the recurrence and duplication formulas of the gamma function were employed to simplify expressions given that
\begin{align}
& \Gamma \left[ 1+z\right]  = z\Gamma \left[ z\right] \nonumber \\
& \Gamma \left[ 2z\right]  = \dfrac{1}{\sqrt{2\pi }}2^{2z-\tfrac{1}{2}}\Gamma\left[ z\right] \Gamma \left[ z+\tfrac{1}{2}\right]\nonumber 
\end{align}
\noindent see Equations (6.1.15) and (6.1.18) in \cite{as72}.

Finally, plugging the definition (\ref{sum1}) into (\ref{ilt2}) and simplifying gives the inverse Laplace transform (\ref{ilt1}).
\end{proof}

\noindent The parabolic cylinder function specializes into the complementary error function when its order is at $-1$. The inverse Laplace transform (\ref{ilt1}) thus can be used to obtain an integral representation for the product of complementary error functions. However, this result will not be shown here as its integrand contains an inverse trigonometric function rather than the rational functions that are typical for existing integral representations, see for instance \cite{ng69,gr14}. Instead, the term $p^{-1/2}$ in inverse Laplace transforms such as (\ref{ilt1}) will be removed given that the resulting relations yield integrands in which such rational functions emerge. This will be illustrated in Theorem 3.2 and its Corollary 3.2.1.

\begin{theorem}
Let $\nu$ and $\mu$ be two complex numbers with $\operatorname{Re}\nu <1$ and $\operatorname{Re}\mu <\min \left[ 1-\operatorname{Re}\nu ,2+\operatorname{Re}
\nu \right] $.
 Then, the following inverse Laplace transform holds for $\operatorname{Re}p>0$, $x>0$, $\left\vert \arg y\right\vert <\pi$, $y>0$
\begin{align}
&\exp \left( \tfrac{1}{2}p\left( y-x\right) \right) D_{\mu }\left(
2^{1/2}y^{1/2}p^{1/2}\right) \left\{ D_{\nu }\left(
-2^{1/2}x^{1/2}p^{1/2}\right) \right.  \nonumber \\
&\hspace{2cm}\left. -D_{\nu }\left( 2^{1/2}x^{1/2}p^{1/2}\right) \right\} \label{ilt3}
\\
&\dfrac{2^{\left( \mu -\nu \right) /2}\sqrt{\pi }y^{-1/2}}{\Gamma \left[
\left( 1-\mu +\nu \right) /2\right] \Gamma \left[ -\nu \right] }%
\int_{0}^{x}\exp \left( -pt\right) t^{-\left( 1+\mu -\nu \right) /2}\left(
x-t\right) ^{-\left( 1+\nu \right) /2} \nonumber \\
&\hspace{0.5cm}\times \left( y+t\right) ^{\left( 1+\mu \right) /2}\left\{\mathstrut_2 F_1\left( -\dfrac{1+\mu }{2},\dfrac{1+\nu }{2};\dfrac{1-\mu +\nu }{2};\dfrac{%
t\left( x-y-t\right) }{\left( x-t\right) \left( y+t\right) }\right) \right. 
\nonumber \\
&\hspace{0.5cm}+\left. \dfrac{\mu t}{\left(1-\mu +\nu \right)\left( y+t\right)}\mathstrut_2 F_1\left( 
\dfrac{1-\mu }{2},\dfrac{1+\nu }{2};\dfrac{3-\mu +\nu }{2};\dfrac{t\left(
x-y-t\right) }{\left( x-t\right) \left( y+t\right) }\right) \right\} dt \nonumber \\
&+\dfrac{2^{\left( 4+\mu +\nu \right) /2}\sqrt{\pi }x^{1/2}}{\Gamma \left[
-\left( 1+\mu \right) /2\right] \Gamma \left[ -\nu /2\right] }%
\int_{x}^{\infty }\exp \left( -pt\right) t^{\left( \nu -1\right) /2}\left(
t-x\right) ^{-\left( 2+\mu +\nu \right) /2} \nonumber \\
&\hspace{0.5cm}\times \left( y-x+t\right) ^{\mu /2}\left\{ \mathstrut_2 F_1\left( -\dfrac{%
\mu }{2},\dfrac{1-\nu }{2};\dfrac{3}{2};\dfrac{xy}{t\left( y-x+t\right) }%
\right) \right. \nonumber \\
&\hspace{0.5cm}\left. -\dfrac{\mu\left(t-x\right) }{\left(1+\mu \right)\left(y-x+t\right)}\mathstrut_2 F_1\left( \dfrac{%
2-\mu }{2},\dfrac{1-\nu }{2};\dfrac{3}{2};\dfrac{xy}{t\left( y-x+t\right) }%
\right) \right\} dt \nonumber
\end{align}
\end{theorem}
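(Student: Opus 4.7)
The LHS of (\ref{ilt3}) differs from the LHS of (\ref{ilt1}) only by a factor of $p^{1/2}$, and my plan is to absorb that factor through the standard three-term recurrence $zD_\mu(z) = D_{\mu+1}(z) + \mu D_{\mu-1}(z)$ for the parabolic cylinder function. Applied with $z=\sqrt{2yp}$ this gives
\[
p^{1/2}D_\mu\!\bigl(\sqrt{2yp}\bigr) = (2y)^{-1/2}\bigl[D_{\mu+1}\!\bigl(\sqrt{2yp}\bigr) + \mu D_{\mu-1}\!\bigl(\sqrt{2yp}\bigr)\bigr],
\]
so, after using (\ref{sum1}) to rewrite $D_\nu(-\sqrt{2xp})-D_\nu(\sqrt{2xp})$ in terms of $\sqrt{p}\,\Phi((1-\nu)/2;3/2;xp)$ and then applying the recurrence, the LHS of (\ref{ilt3}) becomes
\[
\frac{2^{(\nu+3)/2}\sqrt{\pi x/y}}{\Gamma[-\nu/2]}\,\exp\bigl(\tfrac{1}{2}py-xp\bigr)\,\bigl[D_{\mu+1}\!\bigl(\sqrt{2yp}\bigr) + \mu D_{\mu-1}\!\bigl(\sqrt{2yp}\bigr)\bigr]\,\Phi\bigl(\tfrac{1-\nu}{2};\tfrac{3}{2};xp\bigr).
\]

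For each of the two shifted products, I would invoke the intermediate identity (\ref{ilt2}) derived in the proof of Theorem 3.1, with $\mu$ replaced by $\mu+1$ and $\mu-1$ respectively. The convolution argument, the substitutions $\tau=tu$ and $\tau=s-x+t$, $s=xu$, the Appell-$F_{1}$ evaluation, and its reduction to $\mathstrut_2F_1$ all carry over verbatim under a shift of $\mu$, so each shifted application produces one integral over $(0,x)$ and one over $(x,\infty)$, each carrying a single $\mathstrut_2F_1$ whose parameters are precisely those in the braces of (\ref{ilt3}). Adding the two shifted identities with weights $1$ and $\mu$ and multiplying by the prefactor above then assembles each integral as a sum of two $\mathstrut_2F_1$ terms.

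To bring these into the stated form, I would factor out the common powers $(y+t)^{(1+\mu)/2}$ and $(y-x+t)^{\mu/2}$ coming from the $\mu+1$ contribution, so that the $\mu-1$ contribution acquires the rational multipliers $t/(y+t)$ and $(t-x)/(y-x+t)$. The remaining ratios of Gamma factors between the two contributions simplify using
\[
\Gamma\bigl[\tfrac{3-\mu+\nu}{2}\bigr] = \tfrac{1-\mu+\nu}{2}\,\Gamma\bigl[\tfrac{1-\mu+\nu}{2}\bigr],\qquad \Gamma\bigl[\tfrac{1-\mu}{2}\bigr] = -\tfrac{1+\mu}{2}\,\Gamma\bigl[-\tfrac{1+\mu}{2}\bigr],
\]
producing the coefficients $\mu/(1-\mu+\nu)$ and $-\mu/(1+\mu)$ inside the braces. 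A last application of the Legendre duplication formula, in the form $\Gamma[-\nu/2]\Gamma[(1-\nu)/2]=\sqrt{\pi}\,2^{\nu+1}\Gamma[-\nu]$, consolidates the overall constants into the $\Gamma[-\nu]$ and $\Gamma[-\nu/2]$ appearing in the two RHS prefactors of (\ref{ilt3}).

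The main obstacle is the bookkeeping: tracking compounded powers of $2$, $\sqrt{\pi}$, $\sqrt{x}$, $\sqrt{y}$, and Gamma factors across the two shifted applications of (\ref{ilt2}), and verifying that the prefactors $y^{-1/2}$ and $x^{1/2}$ in (\ref{ilt3}) emerge correctly once the common $(y+t)$ and $(y-x+t)$ powers have been extracted. Because each shifted application of (\ref{ilt2}) in principle requires a stricter half-plane on $\operatorname{Re}\mu$ than stated, the final identity is then to be extended to the full region $\operatorname{Re}\mu<\min[1-\operatorname{Re}\nu,\,2+\operatorname{Re}\nu]$ by analytic continuation in $\mu$.
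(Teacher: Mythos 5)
Your proposal is correct and takes essentially the same route as the paper: the paper likewise absorbs the factor $p^{1/2}$ via the recurrence $zD_{\mu }\left( z\right) =D_{\mu +1}\left( z\right) +\mu D_{\mu -1}\left( z\right) $ at $z=2^{1/2}y^{1/2}p^{1/2}$ and then substitutes Theorem 3.1 at the shifted orders $\mu \pm 1$, so routing through the intermediate identity (\ref{ilt2}) combined with (\ref{sum1}) instead of the packaged transform (\ref{ilt1}) is only a cosmetic reorganization of the same computation. Your closing remark on analytic continuation in $\mu $ addresses a domain-of-validity point that the paper leaves implicit.
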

\begin{proof}
The recurrence relation of the parabolic cylinder function is given by
\[
zD_{\mu }\left( z\right) =D_{\mu +1}\left( z\right) +\mu D_{\mu -1}\left(
z\right)
\]%
\noindent see Equation (14) on p. 119 in \cite{emoth253}.
Replacing $z$ by $2^{1/2}y^{1/2}p^{1/2}$ and multiplying by $p^{-1/2}\exp \left( \tfrac{1}{2}p\left( y-x\right)
\right) \left\{ D_{\nu }\left( -2^{1/2}x^{1/2}p^{1/2}\right) - D_{\nu }\left(
2^{1/2}x^{1/2}p^{1/2}\right) \right\} $
gives%
\begin{align}
&2^{1/2}y^{1/2}\exp \left( \tfrac{1}{2}p\left( y-x\right) \right) D_{\mu
}\left( 2^{1/2}y^{1/2}p^{1/2}\right) \left\{ D_{\nu }\left(
-2^{1/2}x^{1/2}p^{1/2}\right) \right. \nonumber \\
&\hspace{2cm}\left. -D_{\nu }\left( 2^{1/2}x^{1/2}p^{1/2}\right) \right\} = \label{help1}
\\
&\hspace{1cm}p^{-1/2}\exp \left( \tfrac{1}{2}p\left( y-x\right) \right) D_{\mu
+1}\left( 2^{1/2}y^{1/2}p^{1/2}\right) \left\{ D_{\nu }\left(
-2^{1/2}x^{1/2}p^{1/2}\right) \right.  \nonumber \\
&\hspace{2cm}\left. -D_{\nu }\left( 2^{1/2}x^{1/2}p^{1/2}\right) \right\} \nonumber
\\
&\hspace{1cm}+\mu p^{-1/2}\exp \left( \tfrac{1}{2}p\left( y-x\right) \right) D_{\mu
-1}\left( 2^{1/2}y^{1/2}p^{1/2}\right) \left\{ D_{\nu }\left(
-2^{1/2}x^{1/2}p^{1/2}\right) \right. \nonumber \\
&\hspace{2cm}\left. -D_{\nu }\left( 2^{1/2}x^{1/2}p^{1/2}\right) \right\} \nonumber
\end{align}%
\noindent Plugging the transform (\ref{ilt1}) into (\ref{help1}) and simplifying gives (\ref{ilt3}).
\end{proof}
\begin{corollary}
The relation between the parabolic cylinder function and the complementary error function is given by
\[
D_{-1}\left( z\right) =\sqrt{\dfrac{\pi }{2}}\exp \left( \dfrac{z^{2}}{4}%
\right) \operatorname{erfc}\left( \dfrac{z}{\sqrt{2}}\right)
\]%
see Equation (9.254.1) in \cite{gr14} in which erfc$(z)$ denotes the complementary error function. Equations (E.3c) and (E.3d) in \cite{bchl92} specify the following relations between the error function and its complement%
\begin{align*}
&\operatorname{erfc}\left( z\right) +\operatorname{erf}\left( z\right) =1 \\
&\operatorname{erfc}\left( -z\right) =1+\operatorname{erf}\left( z\right)
\end{align*}and thus
\begin{align}
\operatorname{erfc}\left( -z\right) -\operatorname{erfc}\left( z\right) =2\operatorname{erf}\left(
z\right) \label{help2}
\end{align}
\noindent where erf$(z)$ denotes the error function. The below derivations also use the following properties of the Gaussian hypergeometric
function%
\begin{align*}
&\mathstrut_2 F_1\left( 0,b;c;z\right) =\mathstrut_2 F_1\left( a,0;c;z\right) =1 \\
&\mathstrut_2 F_1\left( 1,\dfrac{3}{2};\dfrac{3}{2};z\right) =\dfrac{1}{1-z}
\end{align*}%
see Equations (15.1.1) and (15.1.8) in \cite{as72}.
Plugging the transform (\ref{ilt1}) into (\ref{help1}), using $\mu =\nu =-1$ and (\ref{help2}) gives the following inverse Laplace transform for the product of two (complementary) error functions
\begin{align}
&\exp \left( py\right) \ \operatorname{erfc}\left( y^{1/2}p^{1/2}\right) \ \operatorname{erf
}\left( x^{1/2}p^{1/2}\right) = \label{ilterf1}  \\
&\hspace{1cm}\dfrac{1}{\pi }\int_{0}^{x}\exp \left( -pt\right) \dfrac{\sqrt{y}}{\sqrt{t}
\left( y+t\right) }dt-\dfrac{1}{\pi }\int_{x}^{\infty }\exp \left(
-pt\right) \dfrac{\sqrt{x}}{\sqrt{y-x+t}\left( y+t\right) }dt \nonumber \\
&\hspace{2cm}[\operatorname{Re}p>0, \left\vert \arg y\right\vert <\pi, y>0, \left\vert \arg x\right\vert <\pi, x\geqslant 0]\nonumber
\end{align}
Using $p=1$ and setting $a$ and $b$ at $y^{1/2}$ and $x^{1/2}$, respectively, then gives the following integral
representation
\begin{align}
&\operatorname{erfc}\left( a\right) \ \operatorname{erf}\left( b\right) = \label{intrep1} \\
&\dfrac{a\exp \left( -a^{2}\right) }{\pi }\int_{0}^{b^{2}}\dfrac{\exp
\left( -t\right) }{\left( t+a^{2}\right) \sqrt{t}}dt-\dfrac{b\exp \left(
-\left( a^{2}+b^{2}\right) \right) }{\pi }\int_{0}^{\infty }\dfrac{\exp
\left( -t\right) }{\left( t+a^{2}+b^{2}\right) \sqrt{t+a^{2}}}dt \nonumber \\
&\hspace{2cm}[\operatorname{Re}a>0, a>0,\operatorname{Re}b\geqslant 0, b\geqslant 0] \nonumber
\end{align}

\noindent which is not present in, for instance, the extensive overview in \cite{ng69}.
\end{corollary}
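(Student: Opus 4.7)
The plan is to take the inverse Laplace transform (\ref{ilt3}) from Theorem 3.2 and specialise at $\mu=\nu=-1$, the value at which the parabolic cylinder function collapses to a complementary error function through $D_{-1}(z)=\sqrt{\pi/2}\exp(z^{2}/4)\operatorname{erfc}(z/\sqrt{2})$. On the left--hand side, substituting this identity for all three parabolic cylinder functions and combining the difference via (\ref{help2}) collects the exponentials into $\exp(py)$ and leaves the constant $\sqrt{\pi/2}\cdot\sqrt{2\pi}=\pi$, so the left--hand side becomes $\pi\exp(py)\operatorname{erfc}(y^{1/2}p^{1/2})\operatorname{erf}(x^{1/2}p^{1/2})$.

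On the right--hand side the first integral collapses cleanly. The two hypergeometric functions reduce to $\mathstrut_2 F_1(0,0;1/2;\cdot)=1$ and $\mathstrut_2 F_1(1,0;3/2;\cdot)=1$, the algebraic factor $t^{-(1+\mu-\nu)/2}(x-t)^{-(1+\nu)/2}(y+t)^{(1+\mu)/2}$ reduces to $t^{-1/2}$, and the bracket $1+\mu t/((1-\mu+\nu)(y+t))$ collapses to $y/(y+t)$. Evaluating the prefactor at $\mu=\nu=-1$ then yields exactly the first integral in (\ref{ilterf1}) up to the overall factor $1/\pi$.

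The main obstacle is the second integral, where the prefactor carries $\Gamma[-(1+\mu)/2]$ in the denominator --- a pole at $\mu=-1$ --- while the bracket contains the term $\mu(t-x)/((1+\mu)(y-x+t))$ whose coefficient diverges at the same point. I would resolve this $0\cdot\infty$ indeterminacy by writing $\mu=-1+\varepsilon$: the factor $1/\Gamma[-\varepsilon/2]\sim -\varepsilon/2$ cancels the $1/(1+\mu)=1/\varepsilon$ in the singular bracket term, leaving a finite limit, whereas the first bracket term is killed by a vanishing coefficient. Applying $\mathstrut_2 F_1(3/2,1;3/2;z)=1/(1-z)$ to the surviving hypergeometric and using the algebraic simplification $1-xy/(t(y-x+t))=(t-x)(y+t)/(t(y-x+t))$ reduces the integrand to $-\sqrt{x}/((y+t)\sqrt{y-x+t})$, which matches the second integral in (\ref{ilterf1}). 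Dividing through by $\pi$ completes the derivation of (\ref{ilterf1}).

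Finally, (\ref{intrep1}) is obtained from (\ref{ilterf1}) by setting $p=1$ and relabelling $a=y^{1/2}$, $b=x^{1/2}$. Multiplying through by $\exp(-a^{2})$ brings the first integral into the stated form, and the substitution $t=s+b^{2}$ in the second integral converts $\int_{b^{2}}^{\infty}$ into $\int_{0}^{\infty}$ while generating the exponential prefactor $\exp(-(a^{2}+b^{2}))$ as required.
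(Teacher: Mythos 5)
Your derivation is correct, and it arrives at \eqref{ilterf1} and \eqref{intrep1} with all constants right (the left-hand side does collapse to $\pi\exp(py)\operatorname{erfc}(y^{1/2}p^{1/2})\operatorname{erf}(x^{1/2}p^{1/2})$, the first integral reduces exactly as you say, and your limit computation for the second integral, using $1-z=(t-x)(y+t)/\left(t(y-x+t)\right)$, yields $-\sqrt{x}/\left((y+t)\sqrt{y-x+t}\right)$ as required). The one substantive difference from the paper is how the removable singularity at $\mu=-1$ is handled. You specialize the finished transform \eqref{ilt3} at $\mu=\nu=-1$ and must therefore resolve the $0\cdot\infty$ indeterminacy between the vanishing prefactor $1/\Gamma\left[-(1+\mu)/2\right]$ and the divergent bracket term $\mu(t-x)/\left((1+\mu)(y-x+t)\right)$ by the expansion $\mu=-1+\varepsilon$; this also tacitly relies on both sides of \eqref{ilt3} being continuous in $\mu$ at $-1$. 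The paper instead goes one step upstream: it sets $\mu=\nu=-1$ in the recurrence identity \eqref{help1} and then inserts \eqref{ilt1} at the shifted orders $\mu+1=0$ and $\mu-1=-2$, where every gamma factor is individually finite (at order $0$ the second integral of \eqref{ilt1} simply drops out because of $\Gamma[-\mu/2]$ in the denominator), so no limiting argument is needed. Your route is slightly more delicate but self-contained given Theorem 3.2; the paper's ordering of the substitutions avoids the indeterminacy altogether. Both lead to the same result, and your final passage from \eqref{ilterf1} to \eqref{intrep1} via $p=1$, the relabelling, and the shift $t\mapsto t+b^{2}$ is exactly what the paper does.
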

\medskip

\begin{theorem}
Let $\nu$ and $\mu$ be two complex numbers with $\operatorname{Re}\nu <1$ and $\operatorname{Re}\mu <\min \left[ 1-\operatorname{Re}\nu ,2+\operatorname{Re}
\nu \right] $.
 Then, the following inverse Laplace transform holds for $\operatorname{Re}p>0$, $\left\vert \arg x\right\vert <\pi$, $x\geqslant 0$, $\left\vert \arg y\right\vert <\pi$, $y>0$
\begin{align}
& p^{-1/2}\exp \left( \tfrac{1}{2}p\left( y-x\right) \right) D_{\mu }\left(
2^{1/2}y^{1/2}p^{1/2}\right) \{D_{\nu }\left( -2^{1/2}x^{1/2}p^{1/2}\right) \nonumber 
\\
& \hspace{2cm}+D_{\nu }\left( 2^{1/2}x^{1/2}p^{1/2}\right) \}= \label{ilt4} \\
& \dfrac{2^{\left( \mu -\nu \right) /2}\sqrt{\pi }}{\Gamma %
\left[ 1+\left( \nu -\mu \right) /2\right] \Gamma \left[ -\nu \right] }%
\int_{0}^{x}\exp \left( -pt\right) t^{\left( \nu -\mu \right) /2}\left(
x-t\right) ^{-\left(1+ \nu\right)/2 } \nonumber \\
& \hspace{1cm}\times \left( y+t\right) ^{\mu /2}\mathstrut_2 F_1\left( 
-\dfrac{\mu }{2},\dfrac{1+\nu }{2};1+\dfrac{\nu -\mu }{2};\dfrac{t\left(
x-y-t\right) }{\left( x-t\right) \left( y+t\right) }\right) dt \nonumber \\
& +\dfrac{2^{1+\left( \mu +\nu \right) /2}\sqrt{\pi }}{\Gamma \left[ \left(
1-\nu \right) /2\right] \Gamma \left[ \left( 1-\mu \right) /2\right] }%
\int_{x}^{\infty }\exp \left( -pt\right) t^{\nu /2}\left( t-x\right)
^{-\left( 1+\mu +\nu \right) /2} \nonumber \\
& \hspace{1cm}\times \left( y-x+t\right) ^{\mu /2}\mathstrut_2 F_1\left( -\dfrac{\mu }{2},-%
\dfrac{\nu }{2};\dfrac{1}{2};\dfrac{xy}{t\left( y-x+t\right) }\right) dt\nonumber 
\end{align}
\end{theorem}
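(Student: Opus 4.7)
The plan is to follow the convolution strategy of Theorem 3.1 but replace (\ref{sum1}) with (\ref{sum2}) and pair it with a different Kummer function. Since (\ref{sum2}) provides no $z$-prefactor relating $D_{\nu}(-z)+D_{\nu}(z)$ to the Kummer function, the $p^{-1/2}$ on the left-hand side of (\ref{ilt4}) will not appear automatically as it did in Theorem 3.1; it must instead be built into one of the two factors before the convolution is taken.

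The key preliminary step is to establish the auxiliary inverse Laplace transform
\[
p^{-1/2}\exp\!\left(\tfrac{1}{2}yp\right)D_{\mu}\!\left(2^{1/2}y^{1/2}p^{1/2}\right)=\frac{2^{\mu/2}}{\Gamma[(1-\mu)/2]}\int_{0}^{\infty}\exp(-pt)\,t^{-(\mu+1)/2}(y+t)^{\mu/2}\,dt,
\]
obtained by convolving the integrand of (\ref{iltpcf2}) with $1/\sqrt{\pi t}$ (the inverse Laplace transform of $p^{-1/2}$); after the substitution $\tau=tu$ the resulting $u$-integral is of Beta type and collapses via the identity $\mathstrut_2 F_1(a,b;a;z)=(1-z)^{-b}$. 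Denote the right-hand integrand by $\tilde f_{1}(t)$ on $(0,\infty)$. Next, let $\tilde f_{2}(t)=t^{(\nu-1)/2}(x-t)^{-(\nu+2)/2}$ on $(0,x)$, coming from (\ref{iltkum1}) with $a=-\nu/2$, $b=1/2$. The convolution theorem (\ref{ltconvol}) then identifies $\mathcal{L}[\tilde f_{1}\ast\tilde f_{2}](p)$ with a constant multiple of $p^{-1/2}\exp(py/2-xp)D_{\mu}(2^{1/2}y^{1/2}p^{1/2})\,\Phi(-\nu/2;1/2;xp)$; substituting (\ref{sum2}) and cleaning up via the Legendre duplication formula $\Gamma[-\nu/2]\Gamma[(1-\nu)/2]=2^{\nu+1}\sqrt{\pi}\,\Gamma[-\nu]$ matches this to the left-hand side of (\ref{ilt4}) with the correct overall constant.

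All that remains is to evaluate $(\tilde f_{1}\ast\tilde f_{2})(t)$ in the two regimes of (\ref{iltconvol}). For $t<x$ the convolution runs from $0$ to $t$, and after $\tau=tu$ it becomes an Appell $F_{1}$ integral whose parameters satisfy $b_{1}+b_{2}=c=1+(\nu-\mu)/2$; the standard reduction $F_{1}(a,b_{1},b_{2};b_{1}+b_{2};z_{1},z_{2})=(1-z_{1})^{-a}\mathstrut_2 F_1(a,b_{2};c;(z_{2}-z_{1})/(1-z_{1}))$, followed by Euler's transformation $\mathstrut_2 F_1(a,b;c;z)=(1-z)^{c-a-b}\mathstrut_2 F_1(c-a,c-b;c;z)$, produces exactly the first integrand in (\ref{ilt4}). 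For $t>x$ the convolution runs from $t-x$ to $t$, and the substitution $\tau=t-xu$ gives a second Appell $F_{1}$ with $b_{1}+b_{2}=c=1/2$ that collapses to $\mathstrut_2 F_1((\nu+1)/2,(\mu+1)/2;1/2;xy/(t(y-x+t)))$; applying Euler's transformation once more swaps the numerator parameters into $(-\mu/2,-\nu/2)$, and the elementary identity $(y+t)(t-x)=t(y-x+t)-xy$ simplifies the $(1-z)^{-(\mu+\nu+1)/2}$ prefactor to yield the second integrand of (\ref{ilt4}).

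The main obstacle will be the auxiliary transform in the second paragraph: it is not one of the formulas cited in the paper and must be derived by the $1/\sqrt{\pi t}$-convolution described. Once this transform is available, both the $t<x$ and $t>x$ regimes reduce to single Appell $F_{1}$ integrals of exactly the sort already handled in the proof of Theorem 3.1, avoiding the piecewise inverse Laplace transform and delicate Gauss connection-formula cancellation that would otherwise be forced by trying to absorb $p^{-1/2}$ on the $\Phi$-side of the convolution; the remaining work is then routine Appell-to-$\mathstrut_2 F_1$ reduction, one Euler transformation for the $t>x$ piece, and gamma-function bookkeeping.
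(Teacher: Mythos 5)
Your proposal is correct and follows essentially the same route as the paper: the paper also builds the $p^{-1/2}$ into the parabolic-cylinder factor, pairs it with the Kummer transform (\ref{iltkum1}) specialized at $a=-\nu/2$, $b=1/2$, evaluates the two-regime convolution via Appell $F_{1}$ reductions as in Theorem 3.1, applies the Euler transformation (\ref{15.3.3}), and closes with (\ref{sum2}). The only difference is that the auxiliary transform you propose to derive by convolving with $1/\sqrt{\pi t}$ is in fact tabulated (Equation (6) on p.~290 of \cite{emoti154}), which is precisely what the paper cites as its starting point (\ref{iltpcf3}), so that preliminary step is not an obstacle at all.
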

\begin{proof}
The inverse Laplace transform in Equation (6) on p. 290 in \cite{emoti154} is%
\begin{align*}
& \Gamma\left[  \nu\right]  p^{-1/2} \exp\left(  \tfrac{1}{2}ap\right)  D_{1-2\nu
}\left(  2^{1/2}a^{1/2}p^{1/2}\right)  = \\
& \hspace{1cm}\int_{0}^{\infty}\exp\left(
-pt\right)  2^{1/2-\nu}t^{\nu-1}\left(  t+a\right)  ^{1/2-\nu}dt\nonumber \\
& \hspace{2cm}\left[  \operatorname{Re}p>0,\operatorname{Re}\nu>0,\left\vert
\arg a\right\vert <\pi\right]\nonumber 
\end{align*}
\noindent which in the notation of Theorem 3.3 gives
\begin{align}
& \Gamma\left[  \left(  1-\mu\right)  /2\right]  p^{-1/2}\exp\left(  \tfrac
{1}{2}yp\right)  D_{\mu}\left(  2^{1/2}y^{1/2}p^{1/2}\right)  = \nonumber \\
&\hspace{1cm}\int
_{0}^{\infty}\exp\left(  -pt\right)  2^{\mu/2}t^{-\left(  \mu+1\right)
/2}\left(  t+y\right)  ^{\mu/2}dt\nonumber \\
& \hspace{2cm}\left[  \operatorname{Re}p>0,\operatorname{Re}\mu<1,\left\vert
\arg y\right\vert <\pi\right]\label{iltpcf3}%
\end{align}
\noindent The inverse Laplace transform (\ref{iltkum1}) is specialized for $a=-\tfrac{\nu }{2}$ and $b=\tfrac{1}{2}$ and gives
\begin{align}
& \dfrac{x^{-1/2}}{\sqrt{\pi }}\Gamma \left[ \left( 1+\nu \right) /2\right]
\Gamma \left[ -\nu /2\right] \exp \left( -xp\right) \Phi \left( -\dfrac{\nu 
}{2};\dfrac{1}{2};xp\right) = \nonumber \\
& \hspace{1cm}\int_{0}^{x}\exp \left( -pt\right) t^{\left( \nu -1\right)
/2}\left( x-t\right) ^{-\left( \nu /2\right) -1}dt \nonumber \\
& \hspace{2cm}\left[ \operatorname{Re}p>0,-1<\operatorname{Re}\nu <0,x>0\right]
\label{iltkum3}
\end{align}
\noindent The original functions $f_{1}\left(  t\right)  $ and $f_{2}\left(
t\right) $ are taken from the inverse Laplace transforms (\ref{iltpcf3}) and (\ref{iltkum3}), respectively%
\[
f_{1}\left(  t\right)  =2^{\mu/2}t^{-\left(  \mu+1\right)
/2}\left(  t+y\right)  ^{\mu/2}\text{ and }f_{2}\left(  t\right)  =t^{\left(\nu -1\right)
/2}\left( x-t\right)  ^{-\left(\nu/2\right) -1}
\]
\noindent Using steps akin to those used in the proof of Theorem 3.1 then yields
\begin{align}
& p^{-1/2}\exp \left( \tfrac{1}{2}py-px\right) D_{\mu }\left(
2^{1/2}y^{1/2}p^{1/2}\right) \Phi \left( -\dfrac{\nu }{2};\dfrac{1}{2}%
;px\right) = \label{ilt5} \\
& \dfrac{2^{\mu /2}\sqrt{\pi }x^{1/2}y^{1/2}}{\Gamma \left[ 1+\left( \nu
-\mu \right) /2\right] \Gamma \left[ -\nu /2\right] }\int_{0}^{x}\exp \left(
-pt\right) t^{\left( \nu -\mu \right) /2}\left( x-t\right) ^{-1-\left( \nu
/2\right) } \nonumber \\
& \hspace{1cm}\times \left( y+t\right) ^{\left( \mu -1\right) /2}\mathstrut_2 F_1\left( 
\dfrac{1-\mu }{2},1+\dfrac{\nu }{2};1+\dfrac{\nu -\mu }{2};\dfrac{t\left(
x-y-t\right) }{\left( x-t\right) \left( y+t\right) }\right) dt \nonumber \\
& +\dfrac{2^{\mu /2}}{\Gamma \left[ \left( 1-\mu \right) /2\right] }%
\int_{x}^{\infty }\exp \left( -pt\right) t^{\nu /2}\left( t-x\right)
^{-\left( 1+\mu +\nu \right) /2} \nonumber \\
& \hspace{1cm}\times \left( y-x+t\right) ^{\mu /2}\mathstrut_2 F_1\left( -\dfrac{\mu }{2},-%
\dfrac{\nu }{2};\dfrac{1}{2};\dfrac{xy}{t\left( y-x+t\right) }\right) dt\nonumber 
\end{align}
\noindent The first integral in (\ref{ilt5}) can be rewritten via the following linear transformation formula for the Gaussian
hypergeometric function%
\begin{align}
\mathstrut_2 F_1\left( a,b;c;z\right) =\left( 1-z\right) ^{c-a-b}\mathstrut_2 F_1\left( c-a,c-b;c;z\right)  \label{15.3.3}
\end{align}
\noindent see Equation (15.3.3) in \cite{as72}. Combining the resulting expression for the transform (\ref{ilt5}) with the definition (\ref{sum2}) then gives the inverse Laplace transform (\ref{ilt4}).
\end{proof}
\medskip
\noindent Theorem 3.4 specifies the inverse Laplace transform for the product of two parabolic cylinder functions of which the arguments have opposite sign and Corollary 3.4.1 specializes this expression for a single parabolic cylinder function with negative sign in the argument.
\begin{theorem}
Let $\nu$ and $\mu$ be two complex numbers with $\operatorname{Re}\nu <1$ and $\operatorname{Re}\mu <\min \left[ 1-\operatorname{Re}\nu ,2+\operatorname{Re}
\nu \right]$.
 Then, the following inverse Laplace transform holds for $\operatorname{Re}p>0$, $x>0$, $\left\vert \arg y\right\vert <\pi$, $y>0$
\begin{align}
&p^{-1/2}\exp \left( \tfrac{1}{2}p\left( y-x\right) \right) D_{\mu }\left(
2^{1/2}y^{1/2}p^{1/2}\right) D_{\nu }\left( -2^{1/2}x^{1/2}p^{1/2}\right) =  \label{ilt6}
\\
&\dfrac{2^{\left( \mu -\nu \right) /2}\sqrt{\pi }}{\Gamma \left[ 1+\left(
\nu -\mu \right) /2\right] \Gamma \left[ -\nu \right] }\int_{0}^{x}\exp
\left( -pt\right) t^{\left( \nu -\mu \right) /2}\left( x-t\right) ^{-\left(
1+\nu \right) /2} \nonumber \\
&\times \left( y+t\right) ^{\mu /2}\mathstrut_2 F_1\left( -\dfrac{\mu }{2},%
\dfrac{1+\nu }{2};1+\dfrac{\nu -\mu }{2};\dfrac{t\left( x-y-t\right) }{%
\left( x-t\right) \left( y+t\right) }\right) dt \nonumber \\
&+\dfrac{2^{1+\left( \mu +\nu \right) /2}\sqrt{\pi }x^{1/2}y^{1/2}}{\Gamma %
\left[ -\mu /2\right] \Gamma \left[ -\nu /2\right] }\int_{x}^{\infty }\exp
\left( -pt\right) t^{\left( \nu -1\right) /2}\left( t-x\right) ^{-\left(
1+\mu +\nu \right) /2} \nonumber \\
&\times \left( y-x+t\right) ^{\left( \mu -1\right) /2}\left\{
\mathstrut_2 F_1\left( \dfrac{1-\mu }{2},\dfrac{1-\nu }{2};\dfrac{3}{2};\dfrac{xy}{t\left(
y-x+t\right) }\right) \right. \nonumber  \\
&\left. +\dfrac{\Gamma \left[ -\mu /2\right] \Gamma \left[ -\nu
/2\right] }{\Gamma \left[ \left( 1-\mu \right) /2\right] \Gamma \left[
\left( 1-\nu \right) /2\right] }\left( \dfrac{t\left( y-x+t\right) }{4xy}%
\right) ^{1/2}\mathstrut_2 F_1\left( -\dfrac{\mu }{2},-\dfrac{\nu }{2};\dfrac{1}{2};\dfrac{%
xy}{t\left( y-x+t\right) }\right) \right\}dt \nonumber 
\end{align}
\end{theorem}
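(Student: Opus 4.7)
The strategy I would adopt is to obtain (\ref{ilt6}) by averaging the two inverse Laplace transforms (\ref{ilt1}) and (\ref{ilt4}) already established in Theorems 3.1 and 3.3. Writing $D_\nu(-z)=\tfrac{1}{2}\bigl[\bigl(D_\nu(-z)+D_\nu(z)\bigr)+\bigl(D_\nu(-z)-D_\nu(z)\bigr)\bigr]$ and multiplying by $p^{-1/2}\exp\bigl(\tfrac{1}{2}p(y-x)\bigr)D_\mu(2^{1/2}y^{1/2}p^{1/2})$, the left-hand side of (\ref{ilt6}) becomes the arithmetic mean of the left-hand sides of (\ref{ilt4}) and (\ref{ilt1}). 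Thus the whole task reduces to showing that halving the sum of their right-hand sides produces the right-hand side of (\ref{ilt6}).

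The first observation is that the integrals over $(0,x)$ in (\ref{ilt1}) and (\ref{ilt4}) are literally identical in coefficient, algebraic factors, and Gaussian hypergeometric function. Their average is therefore the same integral, which is precisely the first integral in (\ref{ilt6}). For the integrals over $(x,\infty)$, half of the one in (\ref{ilt1}) reproduces verbatim the first $_2F_1$ term inside the braces of (\ref{ilt6}), together with its prefactor $\dfrac{2^{1+(\mu+\nu)/2}\sqrt{\pi}\,x^{1/2}y^{1/2}}{\Gamma[-\mu/2]\,\Gamma[-\nu/2]}$.

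The only step that requires attention is rewriting half of the $(x,\infty)$-integral coming from (\ref{ilt4}) so that it matches the second term inside the braces of (\ref{ilt6}). Its integrand carries $t^{\nu/2}(y-x+t)^{\mu/2}$ instead of the $t^{(\nu-1)/2}(y-x+t)^{(\mu-1)/2}$ of the first term, so I would multiply and divide by $t^{1/2}(y-x+t)^{1/2}/(2x^{1/2}y^{1/2})$, producing exactly the factor $\bigl(t(y-x+t)/(4xy)\bigr)^{1/2}$ shown in (\ref{ilt6}), and simultaneously extracting the common prefactor. The leftover mismatch between $\dfrac{2^{(\mu+\nu)/2}\sqrt{\pi}}{\Gamma[(1-\mu)/2]\,\Gamma[(1-\nu)/2]}$ obtained from halving (\ref{ilt4}) and the extracted common prefactor reshuffles into the ratio $\dfrac{\Gamma[-\mu/2]\,\Gamma[-\nu/2]}{\Gamma[(1-\mu)/2]\,\Gamma[(1-\nu)/2]}$ that appears inside the braces. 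The main, and essentially the only, obstacle is this bookkeeping of the gamma-function prefactors and fractional powers; no new integral identity, hypergeometric transformation, or convolution argument is required beyond what Theorems 3.1 and 3.3 have already supplied.
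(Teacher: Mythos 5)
Your proposal is correct and follows exactly the paper's own route: the paper obtains (\ref{ilt6}) by adding (\ref{ilt1}) and (\ref{ilt4}) (equivalently, averaging, since the left-hand sides sum to twice the target) and simplifying, and your gamma-function and power bookkeeping for the $(x,\infty)$ integrals checks out. Nothing further is needed.
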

\begin{proof}
\noindent The transform (\ref{ilt6}) is obtained by adding the inverse Laplace transforms (\ref{ilt1}) and (\ref{ilt4}) and simplifying the resulting expression.
\end{proof}

\begin{corollary}
Using $y=0$ and the following two properties
\begin{align*}
&D_{\mu }\left( 0\right) =\dfrac{2^{\mu /2}\sqrt{\pi }}{\Gamma \left[
(1-\mu )/2\right] } \\
&\mathstrut_2 F_1\left( a,b;c;1\right) =\dfrac{\Gamma \left[ c\right] \Gamma \left[ c-a-b%
\right] }{\Gamma \left[ c-a\right] \Gamma \left[ c-b\right] }
\end{align*}%
see Equations (46:7:1) in \cite{oms09} and (15.1.20) in \cite{as72}, gives
\begin{align}
&p^{-1/2}\exp \left( -\tfrac{1}{2}px\right) D_{v}\left(
-2^{1/2}x^{1/2}p^{1/2}\right) = \label{ilt6b}\\
&\hspace{1cm}\frac{2^{-\nu /2}\sqrt{\pi }}{\Gamma \left[ -\nu \right]
\Gamma \left[ 1+\nu /2\right] }\int_{0}^{x}\exp \left( -pt\right) t^{\nu
/2}\left( x-t\right) ^{-\left( 1+\nu \right) /2}dt \nonumber \\
&\hspace{1cm}+\frac{2^{\nu /2}}{\Gamma \left[ \left( 1-\nu \right) /2\right]
}\int_{x}^{\infty }\exp \left( -pt\right) t^{\nu /2}\left( t-x\right)
^{-\left( 1+\nu \right) /2}dt \nonumber \\
&\hspace{2cm}\left[ \operatorname{Re}p>0,\operatorname{Re}\nu <1,x>0\right] \nonumber
\end{align}
\end{corollary}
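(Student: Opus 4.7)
The plan is to specialize the inverse Laplace transform (\ref{ilt6}) of Theorem 3.4 by passing to the limit $y\to 0$ on both sides. On the left-hand side this is painless: using $D_{\mu}(0)=2^{\mu/2}\sqrt{\pi}/\Gamma\!\left[(1-\mu)/2\right]$, the left-hand side of (\ref{ilt6}) collapses to
\[
\frac{2^{\mu/2}\sqrt{\pi}}{\Gamma\!\left[(1-\mu)/2\right]}\,p^{-1/2}\exp\!\left(-\tfrac{1}{2}px\right)D_{\nu}\!\left(-2^{1/2}x^{1/2}p^{1/2}\right),
\]
so that dividing through by the common scalar at the end will produce exactly the left-hand side of (\ref{ilt6b}). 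A useful sanity check along the way is that $\mu$ must drop out completely on the right, since it is absent from (\ref{ilt6b}).

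For the first integral in (\ref{ilt6}), setting $y=0$ turns the argument $t(x-y-t)/[(x-t)(y+t)]$ of the Gaussian hypergeometric function into $1$, and the factor $(y+t)^{\mu/2}$ into $t^{\mu/2}$. Then I apply Gauss's summation ${}_2F_1(a,b;c;1)=\Gamma[c]\Gamma[c-a-b]/(\Gamma[c-a]\Gamma[c-b])$ with $a=-\mu/2$, $b=(1+\nu)/2$, $c=1+(\nu-\mu)/2$; the resulting ratio is $\sqrt{\pi}\,\Gamma[1+(\nu-\mu)/2]/(\Gamma[1+\nu/2]\Gamma[(1-\mu)/2])$, which combines with the original prefactor to give, after dividing by $2^{\mu/2}\sqrt{\pi}/\Gamma[(1-\mu)/2]$, precisely $2^{-\nu/2}\sqrt{\pi}/(\Gamma[-\nu]\Gamma[1+\nu/2])$ as claimed.

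The delicate step, and the main obstacle, is the $y\to 0$ limit of the second integral. The outer coefficient carries $y^{1/2}$, while the braces contain two terms. In the first term, ${}_2F_1((1-\mu)/2,(1-\nu)/2;3/2;xy/[t(y-x+t)])\to 1$ as $y\to 0$, and its contribution is proportional to $y^{1/2}$, hence vanishes. The second term contains the compensating factor $\bigl(t(y-x+t)/(4xy)\bigr)^{1/2}\sim \sqrt{t(t-x)}/(2\sqrt{xy})$, which, when multiplied by the outer $y^{1/2}$, yields a finite $y$-independent limit, and the remaining ${}_2F_1(-\mu/2,-\nu/2;1/2;\,\cdot\,)\to 1$. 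Tracking the powers of $(t-x)$ gives $-(1+\mu+\nu)/2+(\mu-1)/2+1/2=-(1+\nu)/2$, and the gamma factors $\Gamma[-\mu/2]\Gamma[-\nu/2]$ in the prefactor cancel against the corresponding ratio inside the braces.

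Putting everything together, the second integral contributes $2^{(\mu+\nu)/2}\sqrt{\pi}/(\Gamma[(1-\mu)/2]\Gamma[(1-\nu)/2])$ times $\int_x^{\infty}e^{-pt}t^{\nu/2}(t-x)^{-(1+\nu)/2}dt$. Dividing by the common factor $2^{\mu/2}\sqrt{\pi}/\Gamma[(1-\mu)/2]$ identified on the left-hand side leaves $2^{\nu/2}/\Gamma[(1-\nu)/2]$, matching the second constant in (\ref{ilt6b}). The parameter $\mu$ indeed cancels throughout, confirming both the correctness of the computation and that the condition $\operatorname{Re}\mu<\min[1-\operatorname{Re}\nu,2+\operatorname{Re}\nu]$ can be dispensed with in the corollary, leaving only $\operatorname{Re}\nu<1$ (together with $x>0$ and $\operatorname{Re}p>0$) as the operative restriction.
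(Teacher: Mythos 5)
Your proposal is correct and follows exactly the paper's route: specialize (\ref{ilt6}) at $y=0$, evaluate $D_{\mu}(0)$ on the left, apply Gauss's summation theorem to the first integral, and let the $y^{1/2}$ prefactor cancel the $(4xy)^{-1/2}$ singularity in the second integral so that only its second brace term survives. Your constants check out, and your observation that $\mu$ must (and does) drop out is a sound verification of the computation the paper leaves implicit.
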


\begin{theorem}
Let $\nu$ and $\mu$ be two complex numbers with $\operatorname{Re}(\nu + \mu)<1$.
 Then, the following inverse Laplace transform holds for $\operatorname{Re}p>0$, $\left\vert \arg x\right\vert <\pi$, $x \geqslant 0$, $\left\vert \arg y\right\vert <\pi$, $y \geqslant 0$, $\left\vert \arg x + \arg y\right\vert <\pi$  
\begin{align}
& p^{-1/2}\exp\left(  \tfrac{1}{2}p\left(  y+x\right)  \right)  D_{\mu}\left(
2^{1/2}y^{1/2}p^{1/2}\right)  D_{\nu}\left(  2^{1/2}x^{1/2}p^{1/2}\right)
=\label{ilt7} \\
& \dfrac{2^{\left(  \mu+\nu\right)  /2}}{\Gamma\left[  \left(  1-\mu
-\nu\right)  /2\right]  }\int_{0}^{\infty}\exp\left(  -pt\right)  t^{-\left(
1+\mu+\nu\right)  /2}\left(  y+t\right)  ^{\mu/2}\left(  x+t\right)  ^{\nu
/2}\nonumber \\
& \hspace{1cm}\times \mathstrut_2 F_1\left(  -\dfrac{\mu}{2},-\dfrac{\nu}{2};\dfrac{ 1-\mu-\nu}%
{2}  ;\dfrac{t\left(  x+y+t\right)  }{\left(
x+t\right)  \left(  y+t\right)  }\right)  dt\nonumber 
\end{align}
\noindent which is identical to the transform in Equation (2.1) in \cite{v17}.

\end{theorem}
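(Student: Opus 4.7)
My plan is to apply the convolution theorem (\ref{ltconvol}) to two inverse Laplace transforms whose original functions are both supported on $(0,\infty)$, so that the convolution is the classical one and the splitting of the integration range into $(0,x)$ and $(x,\infty)$ that occurred in the proofs of Theorems 3.1 and 3.3 is avoided. Concretely, I would take $f_1$ from (\ref{iltpcf2}), whose Laplace transform is $\Gamma[-\mu/2]\exp(\tfrac{1}{2}yp)D_\mu(2^{1/2}y^{1/2}p^{1/2})$, and $f_2$ from (\ref{iltpcf3}), whose Laplace transform is $\Gamma[(1-\nu)/2]p^{-1/2}\exp(\tfrac{1}{2}xp)D_\nu(2^{1/2}x^{1/2}p^{1/2})$. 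With $\alpha_1=\alpha_2=0$ and $\beta_1=\beta_2=\infty$, the prescription (\ref{iltconvol}) collapses to $\int_0^t f_1(\tau)f_2(t-\tau)\,d\tau$, and the outer $t$-integral in the convolution theorem runs over $(0,\infty)$, matching the shape of the right-hand side of (\ref{ilt7}).

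The next step is to reduce the convolution integral to a Gaussian hypergeometric function by mimicking the strategy already used in the proof of Theorem 3.1. After the substitution $\tau=tu$ and pulling out the $t$-dependent prefactors, the remaining integrand has the form $u^{-\mu/2-1}(1-u)^{-(\nu+1)/2}(1+(t/y)u)^{(\mu-1)/2}(1-(t/(x+t))u)^{\nu/2}$, which is an Euler-type representation of an Appell $F_1$ with $a=-\mu/2$, $b_1=(1-\mu)/2$, $b_2=-\nu/2$, $c=(1-\mu-\nu)/2$, $z_1=-t/y$, $z_2=t/(x+t)$. Because $c=b_1+b_2$, the reduction formula $F_1(a,b_1,b_2;b_1+b_2;z_1,z_2)=(1-z_2)^{-a}\mathstrut_2F_1(a,b_1;b_1+b_2;(z_1-z_2)/(1-z_2))$ quoted in the proof of Theorem 3.1 applies. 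Inserting the resulting expression into (\ref{ltconvol}) and dividing by the gamma prefactors $\Gamma[-\mu/2]\,\Gamma[(1-\nu)/2]$ should yield (\ref{ilt7}).

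The obstacle I anticipate is cosmetic but essential: a naive application of the reduction formula produces a $\mathstrut_2F_1$ with argument $-t(x+y+t)/(xy)$ rather than the $t(x+y+t)/((x+t)(y+t))$ that appears in (\ref{ilt7}), and the accompanying power factors then fail to match either. To land on the stated form, one must first apply the symmetry $F_1(a,b_1,b_2;c;z_1,z_2)=F_1(a,b_2,b_1;c;z_2,z_1)$ so that the reduction is carried out with $-t/y$ playing the role of $z_2$. Then $(1-z_2)^{-a}=((y+t)/y)^{\mu/2}$ absorbs the stray $y^{-\mu/2}$ coming from rewriting $(\tau+y)^{(\mu-1)/2}=y^{(\mu-1)/2}(1+\tau/y)^{(\mu-1)/2}$, the quotient $(z_1-z_2)/(1-z_2)$ collapses to $t(x+y+t)/((x+t)(y+t))$, and all the remaining manipulations reduce to routine power-of-$t$ bookkeeping.
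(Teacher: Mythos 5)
Your proposal is correct, but it follows a genuinely different route from the paper's. The paper does not run a fresh convolution for Theorem~3.5: it subtracts the ``difference'' transform (\ref{ilt1}) from the ``sum'' transform (\ref{ilt4}), so that the integrals over $(0,x)$ cancel identically and only the $(x,\infty)$ integral survives, and then reshapes that integral with the linear transformations (15.3.3) and (15.3.6) of \cite{as72} followed by the shift $s=t-x$ to move the lower limit to $0$. You instead convolve the original function of (\ref{iltpcf2}) for $\Gamma\left[-\mu/2\right]\exp\left(\tfrac{1}{2}yp\right)D_{\mu}$ with that of (\ref{iltpcf3}) for $\Gamma\left[\left(1-\nu\right)/2\right]p^{-1/2}\exp\left(\tfrac{1}{2}xp\right)D_{\nu}$; since both originals are supported on $(0,\infty)$, the convolution needs no case split, and your handling of the Appell reduction is exactly right: swapping $(b_{1},z_{1})$ with $(b_{2},z_{2})$ before applying $F_{1}\left(a,b_{1},b_{2};b_{1}+b_{2};z_{1},z_{2}\right)=\left(1-z_{2}\right)^{-a}\,{}_2F_1\left(a,b_{1};b_{1}+b_{2};\left(z_{1}-z_{2}\right)/\left(1-z_{2}\right)\right)$ makes $\left(1-z_{2}\right)^{-a}=\left(\left(y+t\right)/y\right)^{\mu/2}$ absorb the stray powers of $y$ and collapses the argument to $t\left(x+y+t\right)/\left(\left(x+t\right)\left(y+t\right)\right)$. (The ``naive'' order is not actually a dead end --- it is one Pfaff transformation ${}_2F_1\left(a,b;c;z\right)=\left(1-z\right)^{-a}{}_2F_1\left(a,c-b;c;z/\left(z-1\right)\right)$ away from the same answer --- but your order is the cleaner one.) I have checked the bookkeeping: the $t$-power is $-(1+\mu+\nu)/2$, the constant is $2^{\left(\mu+\nu\right)/2}$, and the Euler-integral gammas $\Gamma\left[-\mu/2\right]\Gamma\left[\left(1-\nu\right)/2\right]/\Gamma\left[\left(1-\mu-\nu\right)/2\right]$ cancel against the prefactors of the two building blocks. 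Two remarks: your derivation, resting on the Euler integral for $F_{1}$, initially needs $\operatorname{Re}\mu<0$ and $\operatorname{Re}\nu<1$, so the stated condition $\operatorname{Re}\left(\mu+\nu\right)<1$ is reached only by analytic continuation --- the paper's own chain has the same feature, so you are at the same level of rigor; and the paper's detour through (\ref{ilt1}) and (\ref{ilt4}) is partly deliberate, since recovering the known transform (2.1) of \cite{v17} serves as a consistency check on Theorems~3.1 and~3.3, a cross-check your self-contained argument (which in effect re-derives \cite{v17}) does not supply.
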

\begin{proof}
Subtracting the inverse Laplace transform (\ref{ilt1}) from (\ref{ilt3}) gives
\begin{align}
&p^{-1/2}\exp \left( \tfrac{1}{2}p\left( y-x\right) \right) D_{\mu }\left(
2^{1/2}y^{1/2}p^{1/2}\right) D_{\nu }\left( 2^{1/2}x^{1/2}p^{1/2}\right) = \nonumber \\
&+\dfrac{2^{\left( \mu +\nu \right) /2}}{\Gamma \left[ \left( 1-\mu -\nu
\right) /2\right] }\int_{x}^{\infty }\exp \left( -pt\right) t^{\nu /2}\left(
t-x\right) ^{-\left( 1+\mu +\nu \right) /2} \nonumber \\
&\times \left( y-x+t\right) ^{\mu /2}\left\{ \dfrac{\sqrt{\pi }\Gamma \left[
\left( 1-\mu -\nu \right) /2\right] }{\Gamma \left[ \left( 1-\mu \right) /2%
\right] \Gamma \left[ \left( 1-\nu \right) /2\right] }\mathstrut_2 F_1\left( -\dfrac{\mu }{2%
},-\dfrac{\nu }{2};\dfrac{1}{2};\dfrac{xy}{t\left( y-x+t\right) }\right)
\right.  \nonumber \\
&\left. -\dfrac{\sqrt{\pi }\Gamma \left[ \left( 1-\mu -\nu \right) /2\right]
}{\Gamma \left[ -\mu /2\right] \Gamma \left[ -\nu /2\right] }\left( \dfrac{%
4xy}{t\left( y-x+t\right) }\right) ^{1/2}\mathstrut_2 F_1\left( \dfrac{1-\mu }{2},\dfrac{%
1-\nu }{2};\dfrac{3}{2};\dfrac{xy}{t\left( y-x+t\right) }\right) \right\} dt \nonumber 
\end{align}
\noindent in which the linear transformation formula (\ref{15.3.3}) was used.
Subsequently, using the linear transformation formula
\begin{align}
&F\left( a,b;c;z\right) =\dfrac{\Gamma \left[ c\right] \Gamma \left[ c-a-b%
\right] }{\Gamma \left[ c-a\right] \Gamma \left[ c-b\right] }\mathstrut_2 F_1\left(
a,b;a+b-c+1;1-z\right) \nonumber  \\
&+\left( 1-z\right) ^{c-a-b}\dfrac{\Gamma \left[ c\right]
\Gamma \left[ a+b-c\right] }{\Gamma \left[ a\right] \Gamma \left[ b\right] }%
\mathstrut_2 F_1\left( c-a,c-b;c-a-b+1;1-z\right) \nonumber 
\end{align}
\noindent in Equation (15.3.6) in \cite{as72} gives
\begin{align}
& p^{-1/2}\exp\left(  \tfrac{1}{2}p\left(  y-x\right)  \right)  D_{\mu}\left(
2^{1/2}y^{1/2}p^{1/2}\right)  D_{\nu}\left(  2^{1/2}x^{1/2}p^{1/2}\right)
= \nonumber  \\
& \dfrac{2^{\left(  \nu+\mu\right)  /2}}{\Gamma\left[  \left(  1-\mu
-\nu\right]  /2\right)  }\int_{x}^{\infty}\exp\left(  -pt\right)  t^{\nu/2}\left( t-x\right)  ^{-\left(1+\mu+\nu\right)/2}\left(y-x+t\right) ^{\mu
/2}\nonumber \\
& \hspace{1cm}\times \mathstrut_2 F_1\left(  -\dfrac{\mu}{2},-\dfrac{\nu}{2};\dfrac{1-\mu-\nu}%
{2} ;\dfrac{\left(t-x\right)\left(y+t\right)}{t\left(y-x+t\right)}\right)  dt\nonumber 
\end{align}
\noindent Multiplying both sides by $\exp\left(  px  \right)$, using the substitution $s=t-x$ and subsequently re-introducing $t$ then gives
(\ref{ilt7}).
\end{proof}
\bigskip
\noindent As noted earlier, removing the term $p^{-1/2}$ from transforms such as (\ref{ilt7}) allows obtaining integral representations for (complementary) error functions in which the integrand contains rational functions. This is illustrated in Theorem 3.6 and Corollary 3.6.1 in which the integral representation for 1 -- erf$(a)^{2}$ in \cite{ng69} is generalized into 1 -- erf$(a)$erf$(b)$.

\begin{theorem}
Let $\nu$ and $\mu$ be two complex numbers with $\operatorname{Re}(\nu + \mu)<1$.
Then, the following inverse Laplace transform holds for $\operatorname{Re}p>0$, $\left\vert \arg x\right\vert <\pi$, $x>0$, $\left\vert \arg y\right\vert <\pi$, $y>0$, $\left\vert \arg x + \arg y\right\vert <\pi$
\begin{align}
&\exp \left( \tfrac{1}{2}p\left( y+x\right) \right) D_{\mu }\left(
2^{1/2}y^{1/2}p^{1/2}\right) D_{\nu }\left( 2^{1/2}x^{1/2}p^{1/2}\right) =\label{ilt8}\\
& \hspace{1cm}\dfrac{2^{\left( \mu +\nu \right) /2}x^{-1/2}}{\Gamma \left[ -\left( \mu
+\nu \right) /2\right] }\int_{0}^{\infty}\exp \left( -pt\right) t^{-1-\left( \nu
+\mu \right) /2}\left( y+t\right) ^{\mu /2} \nonumber \\
& \hspace{1.5cm}\times \left( x+t\right) ^{\left( 1+\nu \right) /2}\left\{ \mathstrut_2 F_1\left( -\dfrac{%
\mu }{2},-\dfrac{1+\nu }{2};-\dfrac{\mu +\nu }{2};\dfrac{t\left(
x+y+t\right) }{\left( x+t\right) \left( y+t\right) }\right) \right. \nonumber  \\
& \hspace{1cm}\left. -\dfrac{\nu t}{\left(\mu +\nu\right) \left(x+t\right) }\mathstrut_2 F_1\left( -\dfrac{\mu }{2},%
\dfrac{1-\nu }{2};1-\dfrac{\mu +\nu }{2};\dfrac{t\left( x+y+t\right) }{%
\left( x+t\right) \left( y+t\right) }\right) \right\} dt\nonumber 
\end{align}
\end{theorem}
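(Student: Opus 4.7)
My plan is to mirror the strategy used in Theorem 3.2, but applied to the transform (\ref{ilt7}) of Theorem 3.5 and targeting the second order parameter $\nu$ in order to remove the $p^{-1/2}$ factor. Specifically, starting from the recurrence relation
\[
zD_{\nu}\left( z\right) =D_{\nu+1}\left( z\right) +\nu D_{\nu-1}\left( z\right)
\]
with $z=2^{1/2}x^{1/2}p^{1/2}$, I would solve for $D_{\nu}(z)$ to obtain
\[
D_{\nu}\left( 2^{1/2}x^{1/2}p^{1/2}\right) =\frac{1}{2^{1/2}x^{1/2}p^{1/2}}\left\{ D_{\nu+1}\left( 2^{1/2}x^{1/2}p^{1/2}\right) +\nu D_{\nu-1}\left( 2^{1/2}x^{1/2}p^{1/2}\right) \right\}.
\]
Substituting this into the left-hand side of (\ref{ilt8}), the factor $p^{-1/2}$ appears naturally, so that each of the two resulting products is precisely of the type covered by (\ref{ilt7}).

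The next step is then to apply Theorem 3.5 twice, once with $\nu$ replaced by $\nu+1$ and once with $\nu$ replaced by $\nu-1$. The first application produces an integral with integrand proportional to $t^{-1-(\mu+\nu)/2}(y+t)^{\mu/2}(x+t)^{(1+\nu)/2}\mathstrut_2 F_1\!\left(-\mu/2,-(1+\nu)/2;-(\mu+\nu)/2;\cdot\right)$, which is already the first summand inside the braces of (\ref{ilt8}). The second application produces an integral with integrand proportional to $t^{-(\mu+\nu)/2}(y+t)^{\mu/2}(x+t)^{(\nu-1)/2}\mathstrut_2 F_1\!\left(-\mu/2,(1-\nu)/2;(2-\mu-\nu)/2;\cdot\right)$, and this has to be reconciled with the shape of the second summand of (\ref{ilt8}), namely a multiple of $t\,(x+t)^{-1}$ times an integrand with exponent $(1+\nu)/2$ on $(x+t)$. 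That reconciliation is just algebraic: one writes $t^{-(\mu+\nu)/2}(x+t)^{(\nu-1)/2}=\bigl[t/(x+t)\bigr]\cdot t^{-1-(\mu+\nu)/2}(x+t)^{(1+\nu)/2}$, so both integrals share the same normalising factor.

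The final step is to reduce the prefactors to the single form in (\ref{ilt8}). The only nontrivial piece here is the gamma identity $\Gamma[(2-\mu-\nu)/2]=-\tfrac{\mu+\nu}{2}\,\Gamma[-(\mu+\nu)/2]$, which converts $1/\Gamma[(2-\mu-\nu)/2]$ into $-2/\bigl[(\mu+\nu)\Gamma[-(\mu+\nu)/2]\bigr]$ and generates exactly the coefficient $-\nu/[(\mu+\nu)(x+t)]$ appearing in (\ref{ilt8}). Combining $\nu\cdot 2^{(\mu+\nu-1)/2}/(2^{1/2}x^{1/2})$ from the $\nu-1$ branch with this gamma factor matches $2^{(\mu+\nu)/2}x^{-1/2}/\Gamma[-(\mu+\nu)/2]$ times the desired rational coefficient.

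The main obstacle I anticipate is purely bookkeeping: getting all the powers of $2$, the square-roots of $x$, and the gamma function shifts to line up so that both summands collapse under the common prefactor $2^{(\mu+\nu)/2}x^{-1/2}/\Gamma[-(\mu+\nu)/2]$. A minor subtlety is that the two shifted applications of Theorem 3.5 are literally valid only for $\operatorname{Re}(\mu+\nu)<0$ (from the $\nu\to\nu+1$ case), while Theorem 3.6 claims the weaker bound $\operatorname{Re}(\mu+\nu)<1$; this should be handled by analytic continuation in $\mu+\nu$ once both sides of (\ref{ilt8}) are recognised as meromorphic in that parameter with pole only at $\mu+\nu=0,2,4,\dots$ outside the claimed half-plane.
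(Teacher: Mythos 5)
Your proposal follows essentially the same route as the paper: apply the recurrence $zD_{\nu}(z)=D_{\nu+1}(z)+\nu D_{\nu-1}(z)$ at $z=2^{1/2}x^{1/2}p^{1/2}$, multiply by $p^{-1/2}\exp\left(\tfrac{1}{2}p(y+x)\right)D_{\mu}\left(2^{1/2}y^{1/2}p^{1/2}\right)$, invoke (\ref{ilt7}) twice with $\nu\pm 1$, and collapse the prefactors via $\Gamma\left[1-(\mu+\nu)/2\right]=-\tfrac{\mu+\nu}{2}\Gamma\left[-(\mu+\nu)/2\right]$; your bookkeeping of the powers of $2$, of $x^{1/2}$, and of $(x+t)$ is correct and in fact supplies details the paper leaves implicit. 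Your closing remark on the parameter range is a reasonable supplement, since the two shifted applications of (\ref{ilt7}) literally require $\operatorname{Re}(\mu+\nu)<0$, a point the paper does not address.
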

\begin{proof}
The inverse Laplace transform (\ref{ilt8}) is obtained via the above recurrence relation of the parabolic cylinder function. Replacing $z$ by $2^{1/2}x^{1/2}p^{1/2}$ in the recurrence relation and multiplying by $p^{-1/2}\exp \left( \tfrac{1}{2}p\left( y+x\right) \right) D_{\mu }\left(2^{1/2}y^{1/2}p^{1/2}\right)$ gives
\begin{align}
&\exp \left( \tfrac{1}{2}p\left( y+x\right) \right) D_{\mu }\left(
2^{1/2}y^{1/2}p^{1/2}\right) D_{\nu }\left( 2^{1/2}x^{1/2}p^{1/2}\right) = \nonumber \\
&\hspace{1cm}2^{-1/2}x^{-1/2}p^{-1/2}\exp \left( \tfrac{1}{2}p\left(
y+x\right) \right) D_{\mu }\left( 2^{1/2}y^{1/2}p^{1/2}\right) D_{\nu
+1}\left( 2^{1/2}x^{1/2}p^{1/2}\right)  \nonumber \\
&\hspace{1cm}+\nu 2^{-1/2}x^{-1/2}p^{-1/2}\exp \left( \tfrac{1}{2}p\left(
y+x\right) \right) D_{\mu }\left( 2^{1/2}y^{1/2}p^{1/2}\right) D_{\nu
-1}\left( 2^{1/2}x^{1/2}p^{1/2}\right)\nonumber 
\end{align}
\noindent Plugging the transform (\ref{ilt7}) into the latter expression and simplifying the result via the linear transformation formula (\ref{15.3.3})
gives (\ref{ilt8}).
\end{proof}
\begin{corollary}
The below derivations employ the following property of the Gaussian hypergeometric function%
\[
\mathstrut_2 F_1\left( 1,\dfrac{1}{2};2;z\right) =\mathstrut_2 F_1\left( \dfrac{1}{2},1;2;z\right) =\dfrac{%
2}{1+\sqrt{1-z}}
\]%
see Equation (84) on p. 473 in \cite{pbm390}.
\noindent Using $\mu =\nu =-1$ in (\ref{ilt8}) gives the following inverse Laplace transform for the product of two complementary error functions%
\begin{align}
&\exp \left( p\left( x+y\right) \right) \ \operatorname{erfc}\left(
y^{1/2}p^{1/2}\right) \ \operatorname{erfc}\left( x^{1/2}p^{1/2}\right) = \label{ilterf2} \\
&\hspace{1cm}\dfrac{1}{\pi }\int_{0}^{\infty }\exp \left( -pt\right) \dfrac{\sqrt{x}%
\sqrt{x+t}+\sqrt{y}\sqrt{y+t}}{\left( x+y+t\right) \sqrt{\left( x+t\right)
\left( y+t\right) }}dt \nonumber \\
&\hspace{2cm}[\operatorname{Re}p>0,\left\vert \arg y\right\vert <\pi,y\geqslant0,\left\vert \arg x\right\vert <\pi,x\geqslant0,\left\vert \arg x + \arg y\right\vert <\pi ] \nonumber
\end{align}%
Using $p=1$, $y^{1/2}=a$ and  $x^{1/2}=b$ then gives the following integral
representation for the product of two complementary error functions%
\begin{align}
&\operatorname{erfc}\left( a\right) \ \operatorname{erfc}\left( b\right) = \label{intrep2} \\
&\hspace{1cm}\dfrac{1}{\pi }\exp \left( -\left( a^{2}+b^{2}\right) \right)
\int_{0}^{\infty }\exp \left( -t\right) \dfrac{a\sqrt{t+a^{2}}+b\sqrt{t+b^{2}%
}}{\left( t+a^{2}+b^{2}\right) \sqrt{\left( t+a^{2}\right) \left(
t+b^{2}\right) }}dt \nonumber \\
&\hspace{2cm}[\operatorname{Re}a>0, a\geqslant 0, \operatorname{Re}b>0, b\geqslant 0]\nonumber
\end{align}%
which gives an alternative to the representation given on p. 70 in \cite{v16}. Using $a=0$ and $\operatorname{erfc}\left( 0\right) =1$, see Equation (40:7) in \cite{oms09}, gives %
\begin{align}
&\operatorname{erfc}\left( b\right) =\dfrac{b}{\pi }\exp \left( -b^{2}\right)
\int_{0}^{\infty }\dfrac{\exp \left( -t\right) }{\left( t+b^{2}\right) \sqrt{%
t}}dt \nonumber \\
&\hspace{2cm}[\operatorname{Re}b>0, b>0] \nonumber \\
&\operatorname{erf}\left( b\right) =1-\dfrac{b}{\pi }\exp \left( -b^{2}\right)
\int_{0}^{\infty }\dfrac{\exp \left( -t\right) }{\left( t+b^{2}\right) \sqrt{%
t}}dt \label{intrep3} \\
&\hspace{2cm}[\operatorname{Re}b>0, b>0] \nonumber
\end{align}%
The definition of the complementary error function gives $\operatorname{erf}(a)\operatorname{erf}(b)=\operatorname{erf}(b)-\operatorname{erfc}(a)\operatorname{erf}(b)$ such that plugging (\ref{intrep3}) and (\ref{intrep1}) into the latter relation gives%
\begin{align}
&1-\operatorname{erf}(a)\operatorname{erf}(b)= \label{intrep4} \\
&\hspace{1cm}\dfrac{b}{\pi }\exp \left( -b^{2}\right) \int_{0}^{\infty }\exp \left(
-t\right) \left\{ \dfrac{1}{\left( t+b^{2}\right) \sqrt{t}}-\dfrac{\exp
\left( -a^{2}\right) }{\left( t+a^{2}+b^{2}\right) \sqrt{t+a^{2}}}\right\} dt \nonumber
\\
&\hspace{1cm}+\dfrac{a}{\pi }\exp \left( -a^{2}\right) \int_{0}^{b^{2}}\dfrac{\exp
\left( -t\right) }{\left( t+a^{2}\right) \sqrt{t}}dt \nonumber \\
&\hspace{2cm}[\operatorname{Re}a>0, a>0, \operatorname{Re}b>0, b>0] \nonumber
\end{align}%
\noindent which generalizes the expression for 1--erf$\left(a\right)^{2}$ in Equation (8) on p. 4 in \cite{ng69} to differing arguments. Note that the representation in \cite{ng69} can easily be obtained from (\ref{intrep4}) by using $a=b$ which gives
\[
1-\operatorname{erf}(a)^{2}=\dfrac{2a}{\pi }\exp \left( -a^{2}\right)
\int_{0}^{a^{2}}\dfrac{\exp \left( -t\right) }{\left( t+a^{2}\right) \sqrt{t}%
}dt
\]%
\noindent The substitution $t=a^{2}s^{2}$ then gives%
\[
1-\operatorname{erf}(a)^{2}=\dfrac{4}{\pi }\exp \left( -a^{2}\right) \int_{0}^{1}%
\dfrac{\exp \left( -a^{2}s^{2}\right) }{\left( s^{2}+1\right) }ds
\]%
which is the integral representation in \cite{ng69}.
\end{corollary}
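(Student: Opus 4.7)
The plan is to specialize the integrand in Theorem 3.6's transform (\ref{ilt8}) to $\mu=\nu=-1$, convert both parabolic cylinder functions on the left to $\operatorname{erfc}$ via $D_{-1}(z)=\sqrt{\pi/2}\exp(z^{2}/4)\operatorname{erfc}(z/\sqrt{2})$, and then deduce each of (\ref{ilterf2})--(\ref{intrep4}) in turn. For the left-hand side, applying $D_{-1}$ to $z=2^{1/2}y^{1/2}p^{1/2}$ and $z=2^{1/2}x^{1/2}p^{1/2}$ produces the factor $(\pi/2)\exp(p(y+x))$ together with $\operatorname{erfc}(y^{1/2}p^{1/2})\operatorname{erfc}(x^{1/2}p^{1/2})$, so after dividing by $\pi/2$ the claim (\ref{ilterf2}) reduces to simplifying the right-hand integral.

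For the right-hand side I would insert $\mu=\nu=-1$ and collect the prefactors: $2^{(\mu+\nu)/2}=1/2$, $\Gamma[-(\mu+\nu)/2]=\Gamma[1]=1$, $t^{-1-(\mu+\nu)/2}=1$, $(x+t)^{(1+\nu)/2}=1$, and $(y+t)^{\mu/2}=(y+t)^{-1/2}$. The first hypergeometric collapses via $\mathstrut_2F_1(1/2,0;1;\cdot)=1$, while the second is $\mathstrut_2F_1(1/2,1;2;z)=2/(1+\sqrt{1-z})$ with $z=t(x+y+t)/((x+t)(y+t))$. A short calculation gives $1-z=xy/((x+t)(y+t))$, so $\sqrt{1-z}=\sqrt{xy/((x+t)(y+t))}$, and the coefficient $-\nu t/((\mu+\nu)(x+t))$ specializes to $-t/(2(x+t))$. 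The bracketed expression therefore equals
\[
1-\frac{t}{(x+t)\bigl(1+\sqrt{1-z}\bigr)}.
\]
Rationalizing $1/(1+\sqrt{1-z})$ by multiplying numerator and denominator by $1-\sqrt{1-z}$ turns this into $(y+t)/(x+y+t)$ minus $\sqrt{xy(y+t)/(x+t)}/(x+y+t)$, and after subtracting from $1$ one obtains $\sqrt{x}\bigl(\sqrt{x(x+t)}+\sqrt{y(y+t)}\bigr)/(\sqrt{x+t}\,(x+y+t))$. Multiplying by the prefactor $(1/2)x^{-1/2}(y+t)^{-1/2}$ and comparing with $(\pi/2)$ times the left-hand side yields (\ref{ilterf2}).

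With (\ref{ilterf2}) in hand, the substitution $p=1$, $a=y^{1/2}$, $b=x^{1/2}$ and division by $\exp(a^{2}+b^{2})$ produces (\ref{intrep2}). Setting $a=0$ collapses $a\sqrt{t+a^{2}}$ and leaves $b\sqrt{t+b^{2}}/\bigl((t+b^{2})\sqrt{t(t+b^{2})}\bigr)=b/((t+b^{2})\sqrt{t})$, giving the integral for $\operatorname{erfc}(b)$ in (\ref{intrep3}); the companion expression for $\operatorname{erf}(b)$ follows from $\operatorname{erf}(b)=1-\operatorname{erfc}(b)$.

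Finally, for (\ref{intrep4}), the identity $\operatorname{erfc}(a)=1-\operatorname{erf}(a)$ gives $1-\operatorname{erf}(a)\operatorname{erf}(b)=\operatorname{erfc}(b)+\operatorname{erfc}(a)\operatorname{erf}(b)$; substituting (\ref{intrep3}) for $\operatorname{erfc}(b)$ and (\ref{intrep1}) for $\operatorname{erfc}(a)\operatorname{erf}(b)$ and combining the two integrals over $(0,\infty)$ into a single bracket produces (\ref{intrep4}). The check $a=b$ then gives $1-\operatorname{erf}(a)^{2}=(2a/\pi)\exp(-a^{2})\int_{0}^{a^{2}}\exp(-t)/((t+a^{2})\sqrt{t})\,dt$ (the two open-ended integrals cancel on the bracket's range $(b^{2},\infty)$), and the substitution $t=a^{2}s^{2}$ recovers the Ng--Geller form. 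The main obstacle is unquestionably the radical manipulation after applying $\mathstrut_2F_1(1/2,1;2;z)=2/(1+\sqrt{1-z})$: the identity $1-z=xy/((x+t)(y+t))$ must be exploited to rationalize and then recombine the pieces into the symmetric numerator $\sqrt{x(x+t)}+\sqrt{y(y+t)}$ divided by $(x+y+t)\sqrt{(x+t)(y+t)}$ — without that rearrangement the symmetry in $(x,y)$ on the right-hand side of (\ref{ilterf2}) is not visible.
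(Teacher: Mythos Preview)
Your proposal is correct and follows the same route as the paper: specialize (\ref{ilt8}) at $\mu=\nu=-1$, convert $D_{-1}$ to $\operatorname{erfc}$, use ${}_2F_1(1/2,1;2;z)=2/(1+\sqrt{1-z})$ together with $1-z=xy/((x+t)(y+t))$ to reach (\ref{ilterf2}), then perform the successive specializations $p=1$, $a=0$, and the recombination with (\ref{intrep1}) exactly as the paper does. You supply the explicit rationalization and symmetrization that the paper leaves to the reader; the one parenthetical remark about the $a=b$ case (``the two open-ended integrals cancel on $(b^{2},\infty)$'') is slightly compressed --- strictly speaking one shifts $t\mapsto t-a^{2}$ in the second bracketed term to see the cancellation --- but the conclusion is correct.
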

\bigskip
\section{Correcting two inverse Laplace transforms} 
\noindent This Section utilizes the above results to correct two inverse Laplace transforms that are frequently found. 
\medskip
\subsection{First correction}%
The following inverse Laplace transform is specified in Equation (3.11.4.3) in \cite{pbm592}
\begin{align*}
&D_{\nu }\left( a\sqrt{p}\right) D_{-\nu -1}\left( a\sqrt{p}\right) =\\
&\hspace{1cm} \int_{a}^{\infty }\exp \left( -pt\right)\dfrac{\left( t^{2}-a^{2}\right) ^{-1/2}}{\sqrt{2t}}\cos %
\left[ \left( \nu +\dfrac{1}{2}\right) \arccos \left[ \dfrac{a^{2}}{2t}%
\right] \right] dt\hspace{0.4cm} \ast\ast
\end{align*}
\noindent where $\ast\ast$ indicates that the expression is not correct. The corrected expression, however, can easily be obtained from the results in Section $3$.
\medskip
\begin{theorem}
Let $\nu$ be a complex number. Then, the following inverse Laplace transform holds for $\operatorname{Re}p>0$, $\operatorname{Re}a>0$, $a>0$
\begin{align}
&D_{\nu }\left( a\sqrt{p}\right) D_{-\nu -1}\left( a\sqrt{p}\right) = \label{ilt9}\\
&\hspace{1cm}\int_{\tfrac{1}{2}a^{2}}^{\infty }\exp \left( -pt\right) \dfrac{a\left(
t^{2}-\tfrac{a^{4}}{4}\right) ^{-1/2}}{\sqrt{2\pi t}}\cos \left[ \left( 2\nu
+1\right) \arcsin \left[ \sqrt{\dfrac{2t-a^{2}}{4t}}\right] \right] dt \nonumber
\end{align}
\end{theorem}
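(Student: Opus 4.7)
The plan is to obtain (\ref{ilt9}) by specializing Theorem 3.6 at $x=y=a^{2}/2$ with the parameter substitutions $\mu\to\nu$ and $\nu\to-\nu-1$. Under these choices $2^{1/2}y^{1/2}p^{1/2}=2^{1/2}x^{1/2}p^{1/2}=a\sqrt{p}$, the left-hand side of (\ref{ilt8}) becomes $\exp(\tfrac{1}{2}pa^{2})D_{\nu}(a\sqrt{p})D_{-\nu-1}(a\sqrt{p})$, and the hypothesis $\operatorname{Re}(\mu+\nu)<1$ is automatic since the sum equals $-1$. Several prefactors simplify sharply: $(y+t)^{\mu/2}(x+t)^{(1+\nu)/2}=(a^{2}/2+t)^{\nu/2}(a^{2}/2+t)^{-\nu/2}=1$, the exponent of $t$ collapses to $-1/2$, and the outer constant reduces to $1/(a\sqrt{\pi})$.

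Next I would introduce the auxiliary angle $\theta\in[0,\pi/2)$ via $\sin^{2}\theta=t(a^{2}+t)/(a^{2}/2+t)^{2}$, equivalently $\cos\theta=a^{2}/(a^{2}+2t)$. The two Gaussian hypergeometric functions appearing in the bracket of (\ref{ilt8}) are then of Chebyshev type, with closed evaluations (see Equations (15.1.18) and (22.5.46) of \cite{as72})
\begin{align*}
\mathstrut_2 F_1\left(-\tfrac{\nu}{2},\tfrac{\nu}{2};\tfrac{1}{2};\sin^{2}\theta\right) &= \cos(\nu\theta),\\
\mathstrut_2 F_1\left(-\tfrac{\nu}{2},\tfrac{\nu+2}{2};\tfrac{3}{2};\sin^{2}\theta\right) &= \frac{\sin((\nu+1)\theta)}{(\nu+1)\sin\theta}.
\end{align*}
A short computation using $\sin\theta=\sqrt{t(a^{2}+t)}/(a^{2}/2+t)$ gives $t/[(a^{2}/2+t)\sin\theta]=\tan(\theta/2)$, so the bracket in (\ref{ilt8}) reduces to $\cos(\nu\theta)-\tan(\theta/2)\sin((\nu+1)\theta)$.

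The hard part will be folding this combination into a single cosine. I plan to write $\nu\theta=(\nu+\tfrac{1}{2})\theta-\tfrac{\theta}{2}$ and $(\nu+1)\theta=(\nu+\tfrac{1}{2})\theta+\tfrac{\theta}{2}$ and expand by angle addition; the $\sin((\nu+\tfrac{1}{2})\theta)$ contributions then cancel exactly and what remains collapses to $\cos((\nu+\tfrac{1}{2})\theta)\cos\theta/\cos(\theta/2)$. Substituting $\cos\theta=a^{2}/(a^{2}+2t)$ and $\cos(\theta/2)=\sqrt{(a^{2}+t)/(a^{2}+2t)}$ recasts the bracket as $a^{2}\cos((\nu+\tfrac{1}{2})\theta)/\sqrt{(a^{2}+t)(a^{2}+2t)}$.

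To finish I would substitute $s=t+a^{2}/2$: the factor $\exp(\tfrac{1}{2}pa^{2})$ pulls through the integral and cancels the matching one on the left, the integration range shifts to $[a^{2}/2,\infty)$, and the radicand $t(a^{2}+t)(a^{2}+2t)$ becomes $2s(s^{2}-a^{4}/4)$. The half-angle identity gives $\sin(\theta/2)=\sqrt{(2s-a^{2})/(4s)}$, so $(\nu+\tfrac{1}{2})\theta=(2\nu+1)\theta/2$ rewrites as $(2\nu+1)\arcsin\sqrt{(2s-a^{2})/(4s)}$, delivering (\ref{ilt9}).
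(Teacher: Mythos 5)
Your argument is correct and reaches (\ref{ilt9}), and it begins exactly as the paper does: specialize Theorem 3.6 (equation (\ref{ilt8})) at $x=y=\tfrac{1}{2}a^{2}$ with orders $\nu$ and $-\nu-1$, after which the prefactors collapse to $1/(a\sqrt{\pi})\,t^{-1/2}$ and the common hypergeometric argument becomes $\sin^{2}\theta=4t(a^{2}+t)/(a^{2}+2t)^{2}$. Where you diverge is in how the two-term bracket is folded into a single cosine. The paper stays inside the hypergeometric calculus: it first shifts $t\mapsto t+\tfrac{1}{2}a^{2}$, applies the quadratic transformation (15.3.22) of \cite{as72} to move the argument to $(2t-a^{2})/(4t)$ with doubled parameters, combines the two resulting $\mathstrut_2 F_1$'s via the contiguous relations (15.2.10) and (15.2.20) into a single $\mathstrut_2 F_1(1+\nu,-\nu;\tfrac{1}{2};\cdot)$, and only then invokes the closed trigonometric form. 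You instead exploit the fact that, with your assignment of the orders, both $\mathstrut_2 F_1$'s are already of Chebyshev type at argument $\sin^{2}\theta$, evaluate them immediately as $\cos(\nu\theta)$ and $\sin((\nu+1)\theta)/((\nu+1)\sin\theta)$, and do the recombination with elementary angle-addition identities (the cancellation of the $\sin((\nu+\tfrac{1}{2})\theta)$ terms is correct, as is $t/[(\tfrac{1}{2}a^{2}+t)\sin\theta]=\tan(\theta/2)$), deferring the variable shift to the end. Your route trades the quadratic transformation and three-term contiguous relations for high-school trigonometry, which is arguably more transparent; the paper's route keeps everything as hypergeometric identities until the final step, which makes the single-$\mathstrut_2 F_1$ intermediate form visible. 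One small bookkeeping point: the closed forms you use are Equations (15.1.17) and (15.1.15) in \cite{as72} (with $a=\nu/2$ and $a=-\nu/2$ respectively), not (15.1.18) and (22.5.46).
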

\begin{proof}
Using $a=2^{1/2}x^{1/2}=2^{1/2}y^{1/2}\ $and $\mu =-\nu -1$ allows to rewrite (\ref{ilt8}) as follows%
\begin{align*}
&\exp \left( \tfrac{1}{2}a^{2}p\right) D_{\nu }\left( a\sqrt{p}\right)
D_{-\nu -1}\left( a\sqrt{p}\right) = \\
&\hspace{1cm}\dfrac{1}{a\sqrt{\pi }}\int_{0}^{\infty }\exp \left( -pt\right)
t^{-1/2}\left\{ \mathstrut_2 F_1\left( -\dfrac{1+\nu }{2},\dfrac{1+\nu }{2};\dfrac{1}{2};%
\dfrac{4t\left( a^{2}+t\right) }{\left( a^{2}+2t\right) ^{2}}\right) \right. 
\\
&\hspace{1cm}\left. +\dfrac{2\nu t}{a^{2}+2t}\mathstrut_2 F_1\left( \dfrac{1-\nu }{2},\dfrac{1+\nu }{2}%
;\dfrac{3}{2};\dfrac{4t\left( a^{2}+t\right) }{\left( a^{2}+2t\right) ^{2}}%
\right) \right\} dt
\end{align*}
\noindent Multiplying both sides by $\exp \left( -\tfrac{1}{2}a^{2}p\right)$, using the substitution $s=t+\tfrac{1}{2}a^{2}$ and subsequently re-introducing $t$ gives
\begin{align*}
&D_{\nu }\left( a\sqrt{p}\right) D_{-\nu -1}\left( a\sqrt{p}\right) = \\
&\hspace{1cm}\dfrac{2^{1/2}}{a\sqrt{\pi }}\int_{\tfrac{1}{2}a^{2}}^{\infty }\exp \left(
-pt\right) \left( 2t-a^{2}\right) ^{-1/2}\left\{ \mathstrut_2 F_1\left( -\dfrac{1+\nu }{2},%
\dfrac{1+\nu }{2};\dfrac{1}{2};\dfrac{4t^{2}-a^{4}}{4t^{2}}\right) \right. 
\\
&\hspace{1cm}\left. +\dfrac{\nu \left( 2t-a^{2}\right) }{2t}\mathstrut_2 F_1\left( \dfrac{1-\nu }{2},%
\dfrac{1+\nu }{2};\dfrac{3}{2};\dfrac{4t^{2}-a^{4}}{4t^{2}}\right) \right\} dt
\end{align*}
\noindent The quadratic transformation formula in Equation (15.3.22) in \cite{as72} states
\[
\mathstrut_2 F_1\left( a,b;a+b+\dfrac{1}{2};z\right) =\mathstrut_2 F_1\left( 2a,2b;a+b+\dfrac{1}{2};\dfrac{%
1}{2}-\dfrac{1}{2}\sqrt{1-z}\right) 
\]
\noindent Using the latter relation gives
\begin{align*}
&D_{\nu }\left( a\sqrt{p}\right) D_{-\nu -1}\left( a\sqrt{p}\right) = \\
&\hspace{1cm}\dfrac{2^{1/2}}{a\sqrt{\pi }}\int_{\tfrac{1}{2}a^{2}}^{\infty }\exp \left(
-pt\right) \left( 2t-a^{2}\right) ^{-1/2}\left\{ \mathstrut_2 F_1\left( -1-\nu ,1+\nu ;%
\dfrac{1}{2};\dfrac{2t-a^{2}}{4t}\right) \right.  \\
&\hspace{1cm}\left. +\dfrac{\nu \left( 2t-a^{2}\right) }{2t}\mathstrut_2 F_1\left( 1-\nu ,1+\nu ;%
\dfrac{3}{2};\dfrac{2t-a^{2}}{4t}\right) \right\} dt
\end{align*}

\noindent The latter result can be simplified on the basis of the relations (15.2.10) and (15.2.20) in \cite{as72}, respectively
\begin{align*}
&\left( c-a\right) \mathstrut_2 F_1\left( a-1,b;c;z\right) +\left( 2a-c-az+bz\right)
\mathstrut_2 F_1\left( a,b;c;z\right) \\
&\hspace{2cm}+a\left( z-1\right) \mathstrut_2 F_1\left( a+1,b;c;z\right) =0 \\
&c\left( 1-z\right) \mathstrut_2 F_1\left( a,b;c;z\right) -c\mathstrut_2 F_1\left( a-1,b;c;z\right)
+\left( c-b\right) z\mathstrut_2 F_1\left( a,b;c+1;z\right) =0
\end{align*}
The latter two relations can be combined into
\begin{align*}
&\left( ac-c^{2}\right) \mathstrut_2 F_1\left( a-1,b;c;z\right) +\left( c^{2}-ac+c\left(
a-b\right) z\right) \mathstrut_2 F_1\left( a,b;c;z\right) \\
&\hspace{2cm} +a\left( b-c\right) z\mathstrut_2 F_1\left(
a+1,b;c+1;z\right) =0
\end{align*}

\noindent which gives
\begin{align*}
&\dfrac{a^{2}}{2t}\mathstrut_2 F_1\left( 1+\nu,-\nu;\dfrac{1}{2};\dfrac{2t-a^{2}}{4t}\right) 
=\mathstrut_2 F_1\left( -1-\nu,1+\nu;\dfrac{1}{2};\dfrac{2t-a^{2}}{4t}\right)  \\
&\hspace{2cm}+\dfrac{\nu\left( 2t-a^{2}\right) }{2t}\mathstrut_2 F_1\left( 1-\nu,1+\nu;\dfrac{3}{2};\dfrac{%
2t-a^{2}}{4t}\right)
\end{align*}
\noindent This allows to rewrite the inverse Laplace transform as
\begin{align*}
&D_{\nu}\left( a\sqrt{p}\right) D_{-\nu-1}\left( a\sqrt{p}\right) = \\
&\hspace{1.5cm}\dfrac{a}{\sqrt{2\pi} }\int_{\tfrac{1}{2}a^{2}}^{\infty }\exp \left( -pt\right)\frac{\left(
2t-a^{2}\right) ^{-1/2}}{t} \mathstrut_2 F_1\left( 1+\nu,-\nu;
\dfrac{1}{2};\dfrac{2t-a^{2}}{4t}\right) dt
\end{align*}
\noindent Equation (90) on p. 460 in \cite{pbm390} states%
\[
\mathstrut_2 F_1\left( a,1-a;\dfrac{1}{2};z\right) =\mathstrut_2 F_1\left( 1-a,a;\dfrac{1}{2};z\right) =%
\dfrac{1}{\sqrt{1-z}}\cos \left[ \left( 2a-1\right) \arcsin \left[ \sqrt{z}%
\right] \right] 
\]
\noindent Employing the latter property then gives (\ref{ilt9}).
\end{proof}
\medskip
\subsection{Second correction}
The following inverse Laplace transform can be found in Equation (11) on p. 218 in \cite{emoti154}, Equation (16.7) on p. 379 in \cite{ob73} and Equation (3.11.5.1) in \cite{pbm592}
\begin{align*}
&\exp \left( \tfrac{1}{4}a^{2}p^{2}\right) D_{\mu }\left( ap\right) D_{\nu
}\left( ap\right) = \\
&\hspace{1.5cm}\dfrac{1}{\Gamma \left[ -\mu -\nu \right] }\int_{0}^{\infty }\exp \left(
-pt\right) a^{\mu +\nu }t^{-\left( 1+\mu +\nu \right) }\exp \left( -\dfrac{%
t^{2}}{2a^{2}}\right)\\
&\hspace{1.5cm}\times \mathstrut_2 F_2\left( -\mu ,-\nu ;-\dfrac{\mu +\nu }{2},\dfrac{1-\mu
-\nu }{2};\dfrac{t^{2}}{4a^{2}}\right) dt\hspace{0.4cm}\ast \ast
\end{align*}
\begin{theorem}
Let $\nu$ and $\mu$ be two complex numbers with $\operatorname{Re}\left(\mu +\operatorname\nu\right) <0$. Then, the following inverse Laplace transform holds for $\operatorname{Re}p>0$, $\operatorname{Re}a>0$, $a>0$
\begin{align}
&\exp \left( \tfrac{1}{2}a^{2}p^{2}\right) D_{\mu }\left( ap\right) D_{\nu
}\left( ap\right) = \label{ilt10} \\
&\hspace{1.5cm}\dfrac{1}{\Gamma \left[ -\mu -\nu \right] }\int_{0}^{\infty }\exp \left(
-pt\right) a^{\mu +\nu }t^{-\left( 1+\mu +\nu \right) }\exp \left( -\dfrac{%
t^{2}}{2a^{2}}\right)\nonumber \\
&\hspace{1.5cm}\times \mathstrut_2 F_2\left( -\mu ,-\nu ;-\dfrac{\mu +\nu }{2},\dfrac{1-\mu
-\nu }{2};\dfrac{t^{2}}{4a^{2}}\right) dt\nonumber
\end{align}
\end{theorem}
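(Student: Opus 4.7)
The plan is to derive (\ref{ilt10}) directly from the classical integral representation of the parabolic cylinder function rather than via Theorem 3.5, because the argument $ap$ in (\ref{ilt10}) is linear in $p$ and therefore does not fit the $p^{1/2}$--arguments of the Laplace transforms in Section 3. Starting from
\[
D_{\nu}(z) = \dfrac{\exp(-z^{2}/4)}{\Gamma[-\nu]} \int_{0}^{\infty} \exp\!\left(-zt - \tfrac{1}{2}t^{2}\right) t^{-\nu-1}\, dt \qquad (\operatorname{Re}\nu<0),
\]
see \cite{emoth253}, the substitutions $z=ap$ and $t\mapsto s/a$ immediately yield the single--factor inverse Laplace transform
\[
\exp(\tfrac{1}{4}a^{2}p^{2})\, D_{\nu}(ap) = \dfrac{a^{\nu}}{\Gamma[-\nu]} \int_{0}^{\infty} \exp(-ps)\, s^{-\nu-1}\exp(-s^{2}/(2a^{2}))\, ds ,
\]
valid for $\operatorname{Re}\nu<0$. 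Multiplying this identity by its $\mu$--analogue expresses the left--hand side of (\ref{ilt10}) as a double Laplace transform supported on $(0,\infty)\times(0,\infty)$.

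Since both underlying functions are supported on $(0,\infty)$, the convolution theorem collapses the double integral to $\int_{0}^{\infty}\exp(-pt)\,h(t)\,dt$ with
\[
h(t) = \dfrac{a^{\mu+\nu}}{\Gamma[-\mu]\Gamma[-\nu]} \int_{0}^{t} s^{-\mu-1}(t-s)^{-\nu-1}\exp\!\left(-\dfrac{s^{2}+(t-s)^{2}}{2a^{2}}\right) ds .
\]
The substitution $s=t\sigma$ extracts a factor $t^{-\mu-\nu-1}$, and the algebraic identity $\sigma^{2}+(1-\sigma)^{2} = 1-2\sigma(1-\sigma)$ then separates the exponential as $\exp(-t^{2}/(2a^{2}))\exp(t^{2}\sigma(1-\sigma)/a^{2})$, so the factor $\exp(-t^{2}/(2a^{2}))$ appearing in (\ref{ilt10}) emerges outside the inner integral and one is left with
\[
\int_{0}^{1} \sigma^{-\mu-1}(1-\sigma)^{-\nu-1}\exp\!\left(\dfrac{t^{2}\sigma(1-\sigma)}{a^{2}}\right) d\sigma .
\]

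Expanding this remaining exponential as a power series in $\sigma(1-\sigma)$ and integrating term by term produces Beta functions $B(-\mu+k,-\nu+k) = \Gamma[-\mu+k]\Gamma[-\nu+k]/\Gamma[-\mu-\nu+2k]$. Applying Legendre's duplication formula in the form $(-\mu-\nu)_{2k} = 4^{k}\left(-\tfrac{\mu+\nu}{2}\right)_{k}\!\left(\tfrac{1-\mu-\nu}{2}\right)_{k}$ converts the sum into
\[
\dfrac{\Gamma[-\mu]\Gamma[-\nu]}{\Gamma[-\mu-\nu]}\; \mathstrut_2 F_2\!\left(-\mu,-\nu;-\dfrac{\mu+\nu}{2},\dfrac{1-\mu-\nu}{2};\dfrac{t^{2}}{4a^{2}}\right) ,
\]
and the two surviving gamma factors cancel against the prefactor of $h(t)$ to give exactly (\ref{ilt10}). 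Every step is a routine manipulation; the main obstacle I anticipate is the bookkeeping of the factors of $a$, $2$, and $t$ through the successive substitutions, together with relaxing the hypothesis from $\operatorname{Re}\mu<0$ and $\operatorname{Re}\nu<0$ (required for the integral representation to converge) to the weaker $\operatorname{Re}(\mu+\nu)<0$ in the statement; the latter should follow by analytic continuation, both sides of (\ref{ilt10}) being holomorphic in $(\mu,\nu)$ on that larger region.
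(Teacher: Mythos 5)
Your proposal is correct, and it takes a genuinely different --- and substantially more complete --- route than the paper. The paper's own ``proof'' of this theorem is a single sentence: it observes that the exponent $\tfrac14 a^2p^2$ in the tabulated formula is inconsistent with the structure of the transform (\ref{ilt8}) (each factor $D_\nu(z)$ carries an $e^{-z^2/4}$, so the product $D_\mu(ap)D_\nu(ap)$ needs $e^{a^2p^2/2}$ to compensate), and it accepts the right-hand side from the tables without rederivation. You instead prove the entire identity from scratch: the Whittaker integral representation gives $e^{a^2p^2/4}D_\nu(ap)$ as a Laplace transform of $a^\nu s^{-\nu-1}e^{-s^2/(2a^2)}/\Gamma[-\nu]$ (which already exhibits the correct exponent $\tfrac12 a^2p^2$ upon squaring), the convolution theorem with both supports on $(0,\infty)$ produces the finite convolution integral, the substitution $s=t\sigma$ together with $\sigma^2+(1-\sigma)^2=1-2\sigma(1-\sigma)$ isolates $e^{-t^2/(2a^2)}$, and the term-by-term Beta integrals combined with $(-\mu-\nu)_{2k}=4^k\left(-\tfrac{\mu+\nu}{2}\right)_k\left(\tfrac{1-\mu-\nu}{2}\right)_k$ assemble the $\mathstrut_2F_2$ with exactly the stated parameters and prefactor; I verified the bookkeeping of the powers of $a$, $2$ and $t$ and it is consistent. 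The only points worth making explicit are that the interchange of sum and integral is justified by monotone convergence (all terms are nonnegative for $a,t>0$ and $\sigma\in(0,1)$), and that the analytic continuation from $\{\operatorname{Re}\mu<0,\operatorname{Re}\nu<0\}$ to $\{\operatorname{Re}(\mu+\nu)<0\}$ requires checking that the right-hand integral converges and is holomorphic there (it does: near $t=0$ one needs precisely $\operatorname{Re}(\mu+\nu)<0$, and at infinity the $\mathstrut_2F_2$ grows at most like $e^{t^2/(4a^2)}$, which is dominated by $e^{-t^2/(2a^2)}$). Your argument buys an actual verification of the tabulated right-hand side, which the paper does not provide; the paper's approach buys brevity and ties the correction to its own Theorem 3.6.
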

\begin{proof}
From the specification of, for instance, the inverse Laplace transform (\ref{ilt8}), it is clear that the left-hand side of the expression in \cite{emoti154,ob73,pbm592} contains a misprint as the exponential term should be $\exp \left( \tfrac{1}{2}a^{2}p^{2}\right)$ rather than $\exp \left( \tfrac{1}{4}a^{2}p^{2}\right)$.
\end{proof}
\medskip
\section{Two new definite integrals for the generalized hypergeometric function} 
\noindent The below definite integrals for the generalized hypergeometric function are derived from the inverse Laplace transform (\ref{ilt10}) in combination with two results from Section $3$.

\subsection{First integral} Using $a=2^{1/2}x^{1/2}$ in (\ref{ilt10}) gives
\begin{align}
&\exp \left( p^{2}x\right) D_{\mu }\left( 2^{1/2}x^{1/2}p\right) D_{\nu
}\left( 2^{1/2}x^{1/2}p\right) = \label{int1}\\
&\hspace{1.5cm}\dfrac{\left( 2x\right) ^{\left( \mu +\nu \right) /2}}{\Gamma \left[ -\mu
-\nu \right] }\int_{0}^{\infty }\exp \left( -pt\right) t^{-\left( 1+\mu +\nu
\right) }\exp \left( -\dfrac{t^{2}}{4x}\right)\nonumber\\
&\hspace{1.5cm}\times \mathstrut_2 F_2\left( -\mu ,-\nu ;-\dfrac{%
\mu +\nu }{2},\dfrac{1-\mu -\nu }{2};\dfrac{t^{2}}{8x}\right) dt\nonumber
\end{align}
\noindent and the inverse Laplace transform (\ref{ilt8}) for $y=x$ is
\begin{align}
&\exp \left(px \right) D_{\mu }\left(
2^{1/2}x^{1/2}p^{1/2}\right) D_{\nu }\left( 2^{1/2}x^{1/2}p^{1/2}\right) = \label{int1b}\\
&\hspace{1.5cm}\dfrac{2^{\left( \mu +\nu \right) /2}x^{-1/2}}{\Gamma \left[ -\left( \mu
+\nu \right) /2\right] }\int_{0}^{\infty}\exp \left( -pt\right) t^{-1-\left( \nu
+\mu \right) /2}\left( x+t\right) ^{\left(1+\mu+\nu \right) /2} \nonumber \\
&\hspace{1.5cm}\times \left\{ \mathstrut_2 F_1\left( -\dfrac{%
\mu }{2},-\dfrac{1+\nu }{2};-\dfrac{\mu +\nu }{2};\dfrac{t\left(
2x+t\right) }{\left( x+t\right)^2 }\right) \right. \nonumber  \\
&\hspace{1.5cm}\left. -\dfrac{\nu t}{\left(\mu +\nu\right)\left(x+t\right) }\mathstrut_2 F_1\left( -\dfrac{\mu }{2},%
\dfrac{1-\nu }{2};1-\dfrac{\mu +\nu }{2};\dfrac{t\left( 2x+t\right) }{%
\left( x+t\right)^2 }\right) \right\} dt\nonumber 
\end{align}

\noindent Let $f\left( t\right) $ be the original function in the Laplace transform (\ref{int1}) and $%
F\left( p\right) $ be the corresponding image function. Equation (26) on p. 4
of \cite{pbm592} states that the original function of the image function $F\left(
p^{1/2}\right)$ then is related to $f\left( t\right) $ as follows
\begin{equation}
\dfrac{1}{2\sqrt{\pi t^{3}}}\int_{0}^{\infty }\tau \exp \left( -\dfrac{\tau
^{2}}{4t}\right) f\left( \tau \right) d\tau \label{26pbm592}
\end{equation}

\noindent Hence, plugging the original function for the inverse Laplace transform (\ref{int1}) into the expression (\ref{26pbm592}) gives the original function of expression (\ref{int1b}). Straightforward simplifications and redefinitions of variables then give the following definite integral for the generalized hypergeometric function%
\begin{align}
&\int_{0}^{\infty }t^{-\left( \mu +\nu \right) }\exp \left( -\dfrac{x+y}{4xy%
}t^{2}\right) \mathstrut_2 F_2\left( -\mu ,-\nu ;-\dfrac{\mu +\nu }{2},\dfrac{1-\mu -\nu }{2%
};\dfrac{t^{2}}{8x}\right) dt= \nonumber \\
&2^{-\left( \mu +\nu \right) }\Gamma \left[ \dfrac{1-\mu -\nu}{2} \right] y\left( \dfrac{x+y}{xy}\right) ^{\left( 1+\mu +\nu \right)
/2}\left\{ \mathstrut_2 F_1\left( -\dfrac{\mu }{2},-\dfrac{1+\nu }{2};-\dfrac{\mu +\nu }{2};%
\dfrac{y\left( 2x+y\right) }{\left( x+y\right) ^{2}}\right) \right.  \nonumber \\
&\left. -\dfrac{\nu y }{\left(\mu +\nu \right)\left(x+y\right)}\mathstrut_2 F_1\left( -\dfrac{%
\mu }{2},\dfrac{1-\nu }{2};1-\dfrac{\mu +\nu }{2};\dfrac{y\left( 2x+y\right) 
}{\left( x+y\right) ^{2}}\right) \right\} \label{int2} \\
& \hspace{1.5cm}\left[ \operatorname{Re}\left( \mu + \nu \right) < 1, \operatorname{Re}x>0, x>0,\operatorname{Re}y>0, y>0 \right] \nonumber
\end{align}
\medskip
\subsection{Second integral}
\noindent Again, let $f\left( t\right) $ be the original function in the Laplace transform (\ref{int1}) and $%
F\left( p\right) $ be the corresponding image function. Equation (29) on p. 5
of \cite{pbm592} states that the original function of the image function $%
p^{-1/2}F\left( p^{1/2}\right) $ is given by%
\begin{equation}
\dfrac{1}{\sqrt{\pi t}}\int_{0}^{\infty }\exp \left( -\dfrac{\tau ^{2}}{4t}%
\right) f\left( \tau \right) d\tau \label{29pbm592}
\end{equation}
\noindent The property in (\ref{29pbm592}) establishes a relation between the inverse Laplace transforms for $\exp \left( p^{2}x\right) D_{\mu
}\left( 2^{1/2}x^{1/2}p\right)D_{\nu }\left( 2^{1/2}x^{1/2}p\right)$ as well as $p^{-1/2}\exp \left( px\right) D_{\mu }\left(2^{1/2}x^{1/2}p^{1/2}\right)$  $D_{\nu }\left( 2^{1/2}x^{1/2}p^{1/2}\right) $. Equation (\ref{29pbm592}) then allows to obtain the following indefinite integral%
\begin{align}
&\int_{0}^{\infty }t^{-\left( 1+\mu +\nu \right) }\exp \left( -\dfrac{x+y}{%
4xy}t^{2}\right)  \mathstrut_2 F_2\left( -\mu ,-\nu ;-\dfrac{\mu +\nu }{2},\dfrac{1-\mu -\nu 
}{2};\dfrac{t^{2}}{8x}\right) dt= \nonumber \\
&2^{-\left( 1+\mu +\nu \right) }\Gamma \left[ -\dfrac{\mu +\nu}{2}%
\right] \left( \dfrac{x+y}{xy}\right) ^{\left( \mu +\nu \right) /2} \mathstrut_2 F_1\left( -%
\dfrac{\mu }{2},-\dfrac{\nu }{2};\dfrac{1-\mu -\nu }{2};\dfrac{y\left(
2x+y\right) }{\left( x+y\right) ^{2}}\right) \label{int3} \\
& \hspace{1.5cm}\left[ \operatorname{Re}\left( \mu + \nu \right) < 0, \operatorname{Re}x\geqslant 0, x>0, \operatorname{Re}y\geqslant 0, y>0\right] \nonumber 
\end{align}
\\
\\

\end{document}